\numberwithin{equation}{section}
\begin{document}

\newcommand\A{\mathbb{A}}
\newcommand\G{\mathbb{G}}
\newcommand\N{\mathbb{N}}
\newcommand\T{\mathbb{T}}
\newcommand\sO{\mathcal{O}}
\newcommand\sE{{\mathcal{E}}}
\newcommand\tE{{\mathbb{E}}}
\newcommand\sF{{\mathcal{F}}}
\newcommand\sG{{\mathcal{G}}}
\newcommand\GL{{\mathrm{GL}}}
\newcommand\HH{{\mathrm H}}
\newcommand\mM{{\mathrm M}}
\newcommand\fS{\mathfrak{S}}
\newcommand\fP{\mathfrak{P}}
\newcommand\fQ{\mathfrak{Q}}
\newcommand\Qbar{{\bar{\bf Q}}}
\newcommand\sQ{{\mathcal{Q}}}
\newcommand\sP{{\mathbb{P}}}
\newcommand{\Q}{{\bf Q}}
\newcommand{\tH}{\mathbb{H}}
\newcommand{\Z}{{\bf Z}}
\newcommand{\R}{{\bf R}}
\newcommand{\C}{{\bf C}}
\newcommand{\F}{\mathbb{F}}
\newcommand\gP{\mathfrak{P}}
\newcommand\Gal{{\mathrm {Gal}}}
\newcommand\SL{{\mathrm {SL}}}
\newcommand\Hom{{\mathrm {Hom}}}
\newcommand{\legendre}[2] {\left(\frac{#1}{#2}\right)}
\newcommand\iso{{\> \simeq \>}}

\newtheorem{thm}{Theorem}
\newtheorem{theorem}[thm]{Theorem}
\newtheorem{cor}[thm]{Corollary}
\newtheorem{conj}[thm]{Conjecture}
\newtheorem{prop}[thm]{Proposition}
\newtheorem{lemma}[thm]{Lemma}
\theoremstyle{definition}
\newtheorem{definition}[thm]{Definition}
\theoremstyle{remark}
\newtheorem{remark}[thm]{Remark}
\newtheorem{example}[thm]{Example}
\newtheorem{claim}[thm]{Claim}

\newtheorem{lem}[thm]{Lemma}

\theoremstyle{definition}
\newtheorem{dfn}{Definition}

\theoremstyle{remark}
\setlength{\abovedisplayskip}{2pt}
\setlength{\belowdisplayskip}{2pt}

\theoremstyle{remark}
\newtheorem*{fact}{Fact}
\makeatletter
\def\imod#1{\allowbreak\mkern10mu({\operator@font mod}\,\,#1)}
\makeatother

\title{The Eisenstein cycles as modular symbols}

\author{Debargha Banerjee}
\address{INDIAN INSTITUTE OF SCIENCE EDUCATION AND RESEARCH, PUNE, INDIA}
\author{Lo\"ic Merel}
\address{Univ Paris Diderot, Sorbonne Paris Cit\'e, Institut de Math\'ematiques de Jussieu-Paris Rive Gauche, UMR 7586, CNRS, Sorbonne Universit\'es, UPMC Univ Paris 06, F-75013, Paris, France}
\begin{abstract}
For any odd integer $N$, we explicitly write down the {\it Eisenstein cycles} in the first homology group of modular curves of level $N$ as linear combinations of Manin symbols.  These cycles are, by definition, those over which every integral of holomorphic differential forms vanish. Our result can be seen as an explicit version of the Manin-Drinfeld Theorem.  Our method is to characterize such Eisenstein cycles as eigenvectors for the Hecke operators. We make crucial use of expressions of Hecke actions on modular symbols and on auxiliary level $2$ structures.  \end{abstract}

\subjclass[2010]{Primary: 11F67, Secondary: 11F11, 11F20, 11F30}
\keywords{Eisenstein series, Modular symbols, Special values of $L$-functions}
\maketitle
 
\section{Introduction}
\label{Intro}
Let $N$ be a positive integer. Consider the principal congruence subgroup $\Gamma(N)$ which acts on the upper half-plane $\tH$ by homographies, and thus defines the modular curve $Y(N)=\Gamma(N) \backslash \tH$, compactified as $X(N)=Y(N) \cup \partial_N$, where $\partial_N=\Gamma(N) \backslash{\bf P}^{1}({\bf Q})$ is the set of cusps. 

The Manin-Drinfeld theorem \cite{MR0318157} \cite{MR0314846} asserts that the class of a divisor of degree zero supported on $\partial_N$ is torsion in the Jacobian $J(N)$ of the modular curve $X(N)$. The order of such divisor in $J(N)$ can made explicit by the use of Siegel units (\cite{MR648603}). We call {\it Eisenstein cycles} the elements $e\in \HH_1(X(N), \partial_N, {\bf R})$ such that $\int_{e}\omega=0$ for all $\omega\in \HH^0(X(N), \Omega^1)$. They constitute a real vector space $E(N)$. The Manin-Drinfeld theorem is reformulated by saying that $E(N)$ admits a ${\bf Q}$-rational structure in  $\HH_1(X(N), \partial_N, {\bf Q})$.
Our aim is to determine explicitly a basis for $E(N)$ in the following sense.
Let $\overline{\SL_2(\Z/N\Z)}=\SL_2(\Z/N\Z)/{\pm 1}=\pm\Gamma(N)\backslash\SL_2({\bf Z})$.
Let
$$\xi : \overline{\SL_2(\Z/N\Z)} \rightarrow \HH_1(X(N),\partial_N,{\bf Z})$$
be the map that takes the class of a matrix $g \in \mathrm{SL}_2({\bf Z})$ to 
the class in $\HH_1(X(N),\partial_N, \Z)$ of the image in 
$X(N)$ of the geodesic in $\tH \cup \sP^1(\Q)$ joining $g.0$ and $g.\infty$. It is surjective
\cite{MR0314846}. Hence we call $\xi(g)$ a {\it Manin generator}.
The Manin generators satisfy the {\it Manin relations}:
For all $g \in \Gamma(N) \backslash \SL_2({\bf Z})$, 
$\xi(g)+\xi(g S)=0$ and $\xi(g)+\xi(g U)+\xi(g U^2)=0$, where
 $T=\left(\begin{smallmatrix}
1 & 1\\
0 & 1\\
\end{smallmatrix}\right)$, $S=\left(\begin{smallmatrix}
0 & -1\\
1 & 0\\
\end{smallmatrix}\right)$ 
and  $U=ST=\left(\begin{smallmatrix}
0 & -1\\
1 & 1\\
\end{smallmatrix}\right)$.

Hence we wish to express the Eisenstein cycles as rational combinations of Manin generators. Because of the Manin relations, such a problem does not admit a unique solution. To eliminate this ambiguity, we introduce an auxiliary $\Gamma(2)$-structure. Thus we exibit a distinguished solution to our problem. The auxiliary level $2$-structure is reminiscent of the usual need to rigidify moduli problems by adding a level-structure, but we are not able to make an explicit connection. At any rate, our reliance on an auxiliary level $2$-structure limits our ability to treat the cases where $N$ is even. This is why we limit ourselves to the cases where the level $N$ is odd in the present article, except for a brief discussion in section 8, where we fully explain the role of the $\Gamma(2)$-structure.

Before we state our theorem, we need to explain to what extent our result does not seem to follow from the existing literature. 
We have a series of group isomorphisms:
\[
\HH_1(X(N), \partial_N, \Z) \simeq \Hom (\HH_1(Y(N),\Z),\Z)\simeq \Hom (\Gamma(N),\Z),
\]
where the first map comes from the intersection pairings and the inverse of the second map associates to $\gamma\in \Gamma(N)$ the class of the image in $Y(N)$ of a path from $z_{0}$ to $\gamma z_{0}$ in $\tH$ (where $z_{0}$ is any point in $\tH$). Hence finding the desired Eisenstein cycles amounts to make explicit certain group homomorphisms $\Gamma(N)\rightarrow {\bf Z}$.

In \cite{MR553997}, Mazur accomplished such a program when ${\Gamma(N)}$ is replaced by the  congruence subgroup $\Gamma_{0}(N)$, for $N$ odd prime. He calls the corresponding group homomorphism $\Gamma_{0}(N)\rightarrow {\bf Z}$ the {\it Dedekind-Rademacher homomorphism}, which is obtained as a period homomorphism for Eisenstein series. The method has been extended by Stevens  (\cite{MR670070}) to the more general modular curves $X(N)$ (without any restriction on parity for $N$). However, the group homomorphisms $\Gamma(N)\rightarrow {\bf R}$ exhibited by Stevens do not enable to write directly the Eisenstein cycles as linear combinations of Manin symbols.

Nevertheless what Mazur called the Dedekind-Rademacher homomorphism was used by one of us \cite{MR1405312} to find the desired expression of the (unique up to scalar in that case) Eisenstein cycle in the case where ${\Gamma(N)}$ is replaced by the  congruence subgroup $\Gamma_{0}(N)$, for $N$ odd prime. Already in that work, the introduction of an auxiliary $\Gamma(2)$-structure played a key role. By a similar method, one of us \cite{debargha}, and together with Krishnamoorty\cite{Pacific} treated the modular curves $X_{0}(p^{2})$ and $X_{0}(pq)$ respectively for $p$ and $q$ odd prime numbers, at the cost of significant additional technical difficulties.

Our method in the current paper does not rely on Dedekind-Rademacher type homomorphisms and yields more general results. We propose a formula for the Eisenstein elements and verify that they are eigenvectors for the Hecke operators. Our calculations to that extent depend crucially on the formulas for Hecke operators obtained by one of us in \cite{MR1316830}.

For $P= \left(\begin{smallmatrix}
{\bar x}\\
{\bar y}\\
\end{smallmatrix}\right)\in (\Z/N\Z)^2$, choose the representatives $x$, $y \in \Z$ of $\bar x$ and $\bar y$ respectively with $x-y$ odd. 
Define: 
\[
F(P)= -\frac{1}{4}[ \frac{(\cos (\frac{\pi x}{N})+\cos (\frac{\pi y}{N})}{(\cos (\frac{\pi x}{N})- \cos (\frac{\pi y}{N}) }] \in \R. 
\]
From the above expression, it is easy to see that for all $P \in (\Z/N \Z)^2$:
$F(P)=F(-P)$ and $F(P)+F(S P)=0$.

Let $\overline{F}$ be a function on $(\Z/N \Z)^2 / \{\pm 1\}$ obtained from $F$ by passing to quotients. Let
\[
\sE_P=\sum_{{\gamma} \in \overline{\SL_2(\Z/N\Z)}} {\bar F}(\gamma^{-1} P)\xi(\gamma)    = \sE_{-P}.
\]

\begin{theorem} 
\label{Eisensteinclass}
For $P \in (\Z/N\Z)^2$, the modular symbols $\sE_P$ satisfy the following properties:

1)
Suppose $l$ is an odd prime number and 
 $l \equiv 1 \pmod N$. Let $T_l$ be the Hecke operator at the prime $l$. The Eisenstein cycles
$\sE_P$ satisfy the equality:
\[
T_l(\sE_P) = (l + 1)\sE_P .
\]

2)
The  classes of $\sE_P$  lie in the kernel of $R$ and hence they are Eisenstein cycles. 

\end{theorem}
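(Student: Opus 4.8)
The plan is to prove the two assertions in order, since the second relies on the Hecke-eigenvector property established in the first.

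For part (1), I would begin by substituting the definition of $\sE_P$ and applying $T_l$ term by term. The crucial input is the explicit formula for the Hecke action $T_l$ on Manin symbols from \cite{MR1316830}. Writing $T_l(\xi(\gamma))$ as a sum over a system of coset representatives (the Heilbronn-type matrices of determinant $l$), I would reindex the resulting double sum so that the coefficient of each fixed Manin generator $\xi(\gamma)$ becomes a weighted sum of values of $\overline{F}$ at points $\gamma^{-1} P$ transformed by the determinant-$l$ matrices. The condition $l \equiv 1 \pmod N$ is what makes this work cleanly: modulo $N$ every such matrix acts trivially on $(\Z/N\Z)^2$ up to the coset structure, so that the action on the argument $\gamma^{-1}P$ collapses. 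The goal is to show that the total weight attached to each $\xi(\gamma)$ equals $(l+1)\overline{F}(\gamma^{-1}P)$, which would give exactly $T_l(\sE_P)=(l+1)\sE_P$. I expect the main obstacle to lie precisely here: verifying that the combinatorial sum over the $l+1$ (or associated $l$ plus lower-order) Heilbronn matrices, evaluated against $\overline{F}$, reduces to the scalar $l+1$. This should follow from the functional equations $F(P)=F(-P)$ and $F(P)+F(SP)=0$ together with the congruence $l\equiv 1\pmod N$, but pinning down the bookkeeping of coset representatives and the cusps they move is the delicate step.

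For part (2), I would use the reformulation of the Eisenstein condition in terms of Hecke eigenvalues. The boundary (or rather the map $R$ to the relevant cusp homology, which I take to be the map whose kernel characterizes Eisenstein cycles) intertwines with the Hecke operators, and the space $\HH_1(X(N),\partial_N,\R)$ decomposes under the Hecke algebra into a cuspidal part and an Eisenstein part. On the cuspidal part, the eigenvalues of $T_l$ for $l\equiv 1\pmod N$ are governed by the Ramanujan bound $|a_l|\le 2\sqrt{l}$, which for large enough such primes is strictly smaller than the Eisenstein eigenvalue $l+1$. Consequently, a class that is a $T_l$-eigenvector with eigenvalue exactly $l+1$ cannot have any nonzero projection onto the cuspidal part, and therefore must lie in the Eisenstein subspace, i.e.\ in the kernel of $R$.

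To make this rigorous I would argue as follows. By part (1), $\sE_P$ lies in the $(l+1)$-eigenspace of $T_l$ for every prime $l\equiv 1 \pmod N$. Suppose the cuspidal projection of $\sE_P$ were nonzero; it would then be a nonzero cuspidal class on which $T_l$ acts with eigenvalue $l+1$ for infinitely many primes $l\equiv 1\pmod N$. But the Hecke eigenvalues on cusp forms satisfy $|a_l|\le 2\sqrt l < l+1$, a contradiction for all but finitely many such $l$, and intersecting the eigenspace conditions over infinitely many primes forces the cuspidal projection to vanish. Hence the class of $\sE_P$ is orthogonal to all cusp forms under the integration pairing, which is exactly the statement that $\int_{\sE_P}\omega=0$ for every holomorphic differential $\omega$; equivalently $\sE_P\in\ker R$. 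The delicate point to verify carefully is that the relevant Hecke operators act compatibly on both the relative homology and on holomorphic differentials (so that the eigenvalue comparison is legitimate), and that enough primes $l\equiv 1\pmod N$ exist to run the argument — the latter being guaranteed by Dirichlet's theorem.
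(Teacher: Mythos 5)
Your argument for part (2) is essentially sound and close in spirit to the paper's: the paper invokes the surjectivity of $T_l-(l+1)$ on $\HH^0(X(N),\Omega^1)$ (so every $\tilde\omega$ is $(T_l-(l+1))\omega$, and the integral over $\sE_P$ vanishes by part (1) and the compatibility of $T_l$ with the pairing), while you unpack the same fact via diagonalizability of $T_l$ on cusp forms and the bound $|a_l|\le 2\sqrt l<l+1$. Note that a \emph{single} prime $l\equiv 1\pmod N$ suffices: $2\sqrt l<l+1$ holds for every prime, so your appeal to infinitely many primes and an intersection of eigenspace conditions is unnecessary (though harmless); Dirichlet's theorem is needed only to produce one such $l$.

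The genuine gap is in part (1), and it is exactly the step you flag as ``the delicate step'' and then do not carry out. After the change of variable $\gamma L=\gamma'$ in $T_l(\sE_P)=\sum_L u_L^{\theta}\sum_\gamma \overline F(\gamma^{-1}P)\xi(\gamma L)$, what must be proved is the identity
\[
\sum_L u_L^{\theta}\, F(LQ)=F(Q)+l\,F(lQ)\qquad\text{for all }Q\in(\Z/N\Z)^2,
\]
and only at the very end does $l\equiv 1\pmod N$ enter, via $F(lQ)=F(Q)$, to give the eigenvalue $l+1$. Your claim that ``modulo $N$ every such matrix acts trivially on $(\Z/N\Z)^2$ up to the coset structure'' is false: the matrices in the support of $\theta_l$ reduce to elements of $\SL_2(\Z/N\Z)$ of determinant $1$, which act nontrivially on $Q$; nothing ``collapses.'' Moreover, the functional equations $F(P)=F(-P)$ and $F(P)+F(SP)=0$ cannot yield the displayed identity: they are precisely the Manin relations, satisfied by a huge space of functions on $(\Z/N\Z)^2/\{\pm1\}$, almost none of which produce Hecke eigenvectors. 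The identity is a nontrivial analytic property of the specific cotangent-type function $F$, and the paper's proof of it occupies two sections: one rewrites $F(P)$ as a finite Fourier sum $\sum_s e\bigl(s(HP)^0/2N\bigr)\overline{B_1}(s_1/2N)\overline{B_1}(s_2/2N)$ involving the Bernoulli function and the auxiliary level-$2$ matrix $H$; one then proves, by comparing Fourier coefficients, that any element of $\Z[A_l]$ satisfying condition $C_l$ and commuting with $c$ acts on $f=\overline{B_1}\otimes\overline{B_1}$ by $\sum_L u_L f(sL)=lf(s)+f(ls)$; finally one transports this through the Fourier transform and uses Merel's relation $\theta_l H=H\widetilde{\theta_l}+([1]+[S])\sum_M a_M[M]$, the error term being killed by the anti-invariance of $F$ under $S$. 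Without these ingredients (or a substitute for them), the coefficient bookkeeping you describe cannot be completed, so the proposal does not prove part (1) — and part (2), which depends on it, is left conditional.
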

Part 2) of Theorem~\ref{Eisensteinclass} follows easily from part 1). They are proved by Proposition \ref{Heckeq} and Proposition \ref{kernel} respectively.

It remains to prove that the classes $\sE_{P}$ span the space $E(N)$ of Eisenstein cycles, when $P$ runs through elements of order $N$ of $P \in (\Z/N\Z)^2$.
We define the retraction $R$ as the map 
$\HH_1(X(N), \partial_N, \R)\rightarrow \HH_1(X(N), \R)$
characterized by $\int_{R(c)}\omega=\int_{c}\omega$ for all $\omega\in \HH^0(X(N), \Omega^1)$.
The kernel of $R$ coincides with $E(N)$. 
Let $\phi(N)$ be the order of $(\Z/N\Z)^*$.
Consider the group $D^+_N$  of even Dirichlet character $\chi$ modulo $N$.

Let 
 \[
L(\chi)=\frac{1}{2}\sum_{\mu\in(\Z/N\Z)^{*}}\frac{\chi(\mu)}{1-\cos(\frac{\pi\mu^{0}}{N})},
 \]
where $\mu^{0}$ is an odd representative of $\mu$ in $\Z$. It is essential for the next theorem that $L(\chi)$ is a non-zero algebraic number, as it can be expressed as the algebraic part of the value of a Dirichlet $L$-function.

\begin{thm}
\label{Retraction}
For all Manin-symbol $\xi(\gamma) \in \HH_1(X(N), \partial_N, \Z)$, we have the 
following retraction formula:
 \[
 R(\xi(\gamma))=\xi(\gamma)-\sum_{\alpha \in (\Z/N\Z)^*} \sum_{\chi \in D_N^{+}} 
  \frac{\chi(\alpha)}{2N\phi(N)L(\chi)}(\sE_{\alpha \gamma P_{\infty}}-\sE_{\alpha \gamma P_{0}}).
 \]
\end{thm}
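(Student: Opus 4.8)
The plan is to reduce the identity to a single computation of \emph{boundaries} in $\HH_0(\partial_N,\R)$, using the structure of the retraction $R$. Since $R$ restricts to the identity on $\HH_1(X(N),\R)$ and has kernel exactly $E(N)$, it is an idempotent projection and $\HH_1(X(N),\partial_N,\R)=\HH_1(X(N),\R)\oplus E(N)$. As $\partial_N$ is finite, $\HH_1(\partial_N)=0$, so the connecting map $\partial\colon\HH_1(X(N),\partial_N,\R)\to\HH_0(\partial_N,\R)$ has kernel precisely $\HH_1(X(N),\R)$; hence its restriction to $E(N)$ is injective (if $c\in E(N)$ has $\partial c=0$ then $c\in\HH_1(X(N),\R)$, whence $c=R(c)=0$). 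Now $\xi(\gamma)-R(\xi(\gamma))$ and the right-hand side both lie in $E(N)$ — the latter by part 2) of Theorem~\ref{Eisensteinclass}, being a combination of the $\sE_P$. Since $R(\xi(\gamma))$ is an absolute cycle we have $\partial(\xi(\gamma)-R(\xi(\gamma)))=\partial\,\xi(\gamma)$, so by injectivity of $\partial$ on $E(N)$ it suffices to prove the boundary identity
\[
\partial\!\left(\sum_{\alpha\in(\Z/N\Z)^*}\sum_{\chi\in D_N^{+}}\frac{\chi(\alpha)}{2N\phi(N)L(\chi)}(\sE_{\alpha\gamma P_\infty}-\sE_{\alpha\gamma P_0})\right)=\partial\,\xi(\gamma)=[\gamma P_\infty]-[\gamma P_0],
\]
where $[Q]$ denotes the cusp attached to a primitive vector $Q\bmod N$, the identification being $\gamma\cdot\infty\leftrightarrow\gamma P_\infty$ and $\gamma\cdot 0\leftrightarrow\gamma P_0$ (so $P_\infty=\left(\begin{smallmatrix}1\\0\end{smallmatrix}\right)$, $P_0=\left(\begin{smallmatrix}0\\1\end{smallmatrix}\right)$).

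First I would compute $\partial\sE_P$ for $P$ of order $N$ directly from $\sE_P=\sum_{\gamma}\bar F(\gamma^{-1}P)\xi(\gamma)$ together with the Manin boundary formula $\partial\xi(\gamma)=[\gamma P_\infty]-[\gamma P_0]$. Writing each $\gamma$ with fixed first (resp.\ second) column $Q$ as $\gamma_Q\left(\begin{smallmatrix}1&n\\0&1\end{smallmatrix}\right)$, the coefficient of $[Q]$ becomes a horizontal coset sum $\sum_{n}\bar F\!\left(\begin{smallmatrix}u-nv\\v\end{smallmatrix}\right)$, where $\left(\begin{smallmatrix}u\\v\end{smallmatrix}\right)=\gamma_Q^{-1}P$. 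Using the elementary identity $F\!\left(\begin{smallmatrix}x\\y\end{smallmatrix}\right)=\tfrac14\cot\!\frac{\pi(x+y)}{2N}\cot\!\frac{\pi(x-y)}{2N}$, these sums evaluate to cotangent sums depending only on $v$ and on the position of $P$ relative to the $(\Z/N\Z)^*$-orbit of $Q$. I expect $\partial\sE_P$ to take the form $\sum_{\mu\in(\Z/N\Z)^*/\pm}c(\mu)\,[\mu P]$ plus a divisor supported off this orbit, with the weight $c(\mu)$ essentially $\tfrac{1}{1-\cos(\pi\mu^0/N)}$ — exactly the quantity entering the definition of $L(\chi)$.

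With $\partial\sE_P$ in hand, the final step is a Fourier inversion on the group $(\Z/N\Z)^*/\{\pm1\}$. The point is that $L(\chi)=\tfrac12\sum_\mu\chi(\mu)c(\mu)$ is (half) the character transform of the weight $c$. Substituting $\partial\sE_{\alpha P}$, reindexing the scalings by $\beta=\mu\alpha$, and summing $\sum_{\chi\in D_N^+}\chi(\beta)/L(\chi)$, the orthogonality of even Dirichlet characters collapses the double sum onto the coset $\beta\equiv\pm1$, i.e.\ onto $[P]$ itself; the constant $\tfrac{1}{2N\phi(N)}$ is tuned (together with the size of the fibre of $\gamma\mapsto\gamma P_\infty$) so that the surviving residue is $1$. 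Applied to the $P_\infty$-block this produces $[\gamma P_\infty]$ and to the $P_0$-block $[\gamma P_0]$; the difference $\sE_{\alpha\gamma P_\infty}-\sE_{\alpha\gamma P_0}$ is taken precisely so that the off-orbit contribution, being the same for both blocks after averaging, cancels.

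The main obstacle will be the explicit evaluation in the second step: controlling the cotangent coset sums, proving that the off-orbit part of $\partial\sE_P$ either does not survive the character averaging or cancels in the $P_\infty$–$P_0$ difference, and verifying that the weight $c(\mu)$ matches the summand of $L(\chi)$ up to the stated constant. The non-vanishing and algebraicity of $L(\chi)$, already recorded above, is what makes the inversion well defined; pinning down the exact normalization $\tfrac{1}{2N\phi(N)L(\chi)}$ is the most delicate piece of bookkeeping.
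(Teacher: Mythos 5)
Your proposal follows essentially the same route as the paper's own proof: reduce, via the boundary map $\delta$ and the fact that the right-hand combination lies in $E(N)$, to a boundary identity; compute $\delta(\sE_P)$ by horizontal coset sums, obtaining an orbit part weighted essentially by $1/(1-\cos(\pi\mu^{0}/N))$ plus a $\gamma$-independent multiple of the all-cusps divisor; then invert with even Dirichlet characters, the constant term cancelling in the $P_{\infty}$--$P_{0}$ difference. The evaluation you flag as the ``main obstacle'' is precisely the paper's Theorem~\ref{boundary}, carried out there via the Bernoulli--Fourier expression for $F$ (Proposition~\ref{Alternative}) and the twisted Bernoulli-sum Lemma~\ref{twistedB}, yielding the off-orbit constant $-N/2$; your expected shape of $\delta(\sE_P)$ and the inversion step match the paper's Proposition~\ref{done} exactly.
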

Define the boundary map $\delta$ : $\HH_1(X(N), \partial_N, \Z)\rightarrow \Z[\partial_N]$ as follows. For
$r,s\in \sP^1(\Q)$, the image by $\delta$ of the geodesic of the upper half-plane joining the cusps $r$ to $s$ is
$[\pm\Gamma(N)r]- [\pm\Gamma(N)s]\in \Z[\partial_{N}]$.
For an element $x\in\HH_1(X(N), \partial_N, \R)$,
$\delta(x)=0$ implies that $R(x)=x$. 
Hence to prove Theorem \ref{Retraction}, it is enough to show that 
 \[
 \delta(\xi(\gamma))=\delta(\sum_{\alpha \in (\Z/N\Z)^*} \sum_{\chi \in D_N^{+}} 
  \frac{\chi(\alpha)}{ 2N \phi(N)L(\chi)}(\sE_{\alpha \gamma P_{\infty}}-\sE_{\alpha \gamma P_{0}})).
 \]
We prove this statement in Proposition~\ref{done}. From the above theorem, 
we can easily conclude that:
\begin{cor}
The kernel of $R$ is spanned by the classes of $\sE_P$
for  $P \in (\Z/N \Z)^2$ with $P$ of order $N$. 
\end{cor}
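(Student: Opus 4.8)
The plan is to deduce the corollary directly from the two preceding theorems by a short linear-algebra argument, so I expect no genuine obstacle; the only points requiring care are the decomposition of relative homology underlying the retraction and the verification that every Eisenstein symbol occurring in the formula of Theorem~\ref{Retraction} is indexed by a point of exact order $N$.

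First I would record the easy inclusion. By part 2) of Theorem~\ref{Eisensteinclass} each class $\sE_P$ lies in $\ker R$, so the real span of $\{\sE_P : P \text{ of order } N\}$ is automatically contained in $\ker R = E(N)$.

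For the reverse inclusion I would exploit that $R$ is a retraction. Composing with the natural map $j\colon \HH_1(X(N),\R)\to\HH_1(X(N),\partial_N,\R)$ (which is injective, since $\partial_N$ is a finite set of cusps and contributes no $\HH_1$) yields an idempotent endomorphism $jR$ of $\HH_1(X(N),\partial_N,\R)$ with image $j(\HH_1(X(N),\R))$ and kernel $E(N)$. Thus $\HH_1(X(N),\partial_N,\R)=j(\HH_1(X(N),\R))\oplus E(N)$, and the projection onto the second summand is $c\mapsto c-R(c)$, where I identify $R(c)$ with $jR(c)$ exactly as the displayed formula of Theorem~\ref{Retraction} already does. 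Because $\xi$ is surjective, the Manin generators $\xi(\gamma)$ span $\HH_1(X(N),\partial_N,\R)$, so their projections $\xi(\gamma)-R(\xi(\gamma))$ span the summand $E(N)=\ker R$.

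It then remains to feed in Theorem~\ref{Retraction}, which expresses each projection $\xi(\gamma)-R(\xi(\gamma))$ as an $\R$-linear combination of the symbols $\sE_{\alpha\gamma P_\infty}$ and $\sE_{\alpha\gamma P_0}$. The one step I would check carefully is that these indexing points all have order $N$: the points $P_\infty$ and $P_0$ have order $N$ in $(\Z/N\Z)^2$, the matrix $\gamma$ reduces to an element of $\SL_2(\Z/N\Z)$ acting as a group automorphism of $(\Z/N\Z)^2$, and multiplication by the unit $\alpha$ is likewise an automorphism, so each $\alpha\gamma P_\infty$ and $\alpha\gamma P_0$ again has order $N$. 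Consequently every $\xi(\gamma)-R(\xi(\gamma))$ lies in the span of $\{\sE_P:P\text{ of order }N\}$, whence $\ker R$ is contained in that span. Combining this with the first inclusion gives the asserted equality.
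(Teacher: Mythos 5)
Your proof is correct and follows essentially the same route as the paper, which simply notes that the corollary follows from Theorem~\ref{Eisensteinclass}(2) (giving the inclusion of the span in $\ker R$) together with the retraction formula of Theorem~\ref{Retraction} (giving the reverse inclusion, since the projections $\xi(\gamma)-R(\xi(\gamma))$ span $E(N)$ by surjectivity of $\xi$). You merely make explicit the linear-algebra details --- the idempotence of $jR$, the direct-sum decomposition, and the order-$N$ check on the indexing points --- that the paper leaves implicit.
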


We can offer a straightforward application of Theorem \ref{Retraction}. It enables to write explicitly $L(f,1)$, for $f$ cuspidal modular form of weight $2$ for $
\Gamma(N)$ as a linear combination of periods of $f$. Indeed, one has
\[
\int_{0}^{\infty}f(z)dz=\int_{R(\xi({\rm Id}))}\omega_{f},
\]
where $\omega_{f}$ is the Pullback to $X(N)$ of the differential form $f(z)dz$.

We envisage as well applications similar to those of \cite{MR1405312} for the structure of Hecke algebras completed at Eisenstein primes. There is a possibility that the computation carried out 
in this paper is related to the Eisenstein classes considered in  [Chapter 8 ~\cite{Venkatesh}]. This 
computation should be useful to answer the questions raised in the book. Finally, we have not tried to put our result in perspective with the Ramanujan sums considered in ~\cite{MR983619} by Murty and Ramakrishnan.

 \section{Acknowledgement}
Both authors wish to thank IMSC, Chennai for providing excellent work environment. The second author wishes to thank the program IRSES Moduli for funding his stay in IMSC. The first author 
was partially supported by the SERB grant YSS/2015/001491.
 \section{Modular symbols and retraction maps}
 \label{MSretract}
 
We have a long exact sequence of relative homology:
\[
0  \rightarrow  \HH_1(X(N),\Z) \rightarrow \HH_1(X(N), \partial_N,\Z) \xrightarrow {\delta} {\bf Z}[\partial_N]\rightarrow \Z \rightarrow 0.
\]
The first non-trivial map is the canonical injection. For $r,s\in \sP^1(\Q)$, {\it the modular symbol} $\{r,s\}$ is the class in $\HH_1(X(N),\partial_N,\Z)$ 
of a continuous path in in $\tH \cup \sP^1(\Q)$ joining $r$ to $s$. By definition, its image by the boundary map $\delta$ is 
$[\Gamma(N)r]- [\Gamma(N)s]\in \Z[\partial_{N}]$. 
The last non-trivial map of the long exact sequence is the sum of the coefficients. 

Let $\xi : \mathrm{SL}_2(\Z) \rightarrow \HH_1(X(N),\partial_N,\Z)$ 
be the map that takes the matrix $g \in \mathrm{SL}_2(\Z)$ to the modular symbol $\{g.0,g.\infty\}$.
The map $\xi$ is surjective (Manin ~\cite{MR0314846}).
We note that the class of $\xi(\gamma)=[\gamma]$ depends only on the class of $\gamma$ in 
$\overline{\SL_2(\Z/N\Z)}:=\SL_2(\Z/N\Z)/{\pm Id}$.

We have an isomorphism of real vector spaces $\HH_1(X(N),\Z)\otimes \R = \Hom_{\C}(\mathrm{H}^0(X(N),\Omega^1),\C)$
given by 
\[
c \rightarrow \{\omega \rightarrow \int_c \omega\}.
\]
Consider the group homomorphism $\HH_1(X(N),\partial_N,\Z) \rightarrow  \Hom_{\C}(\HH^0(X(N),\Omega^1),\C)$
given by 
\[
c \rightarrow \{\omega \rightarrow \int_c \omega\}.
\]
The above homomorphism defines a retraction map $R:$
\[
\HH_1(X(N), \partial_N, \Z) \rightarrow \HH_1(X(N),  \R). 
\]

The Manin-Drinfeld theorem is equivalent  \cite{MR1363488} to the fact that the image of $R$ is contained in $\HH_1(X(N), \Q) \subset \HH_1(X(N), \R)$, {\it i.e} $R$ splits the long exact sequence after extending the scalars to $\Q$.

\section{Hecke operators acting on modular symbols}
\label{Hecke}
In this section, we recall the action of the Hecke operators on the space of modular symbols  [\cite{MR1322319}, \cite{MR1136594}]. Suppose $m$ is a positive integer congruent to $1$ 
modulo $N$. Let $A_m$ be the set of matrices in $M_2(\Z)$ of determinant $m$ 
and $A_{m,N}$ be the set of matrices in $A_m$ which are congruent to identity  modulo $N$. The 
congruence subgroup $\Gamma(N )$ acts on the right on $A_{m,N}$. Let $R$  be a system of 
representatives of $\Gamma(N )\backslash  A_{m,N}$. When $m$ is congruent to $1$ modulo $N$, the Hecke correspondence $T_m$
on $X(N )$ is defined by 
\[
\Gamma(N)z \rightarrow \sum_{r \in R} \Gamma(N) rz.
\]
This action does not  dépend on the choice of $R$. It fixes the set of cusps of $X(N )$ pointwise. 
Thus, by transport of structure, the Hecke correspondence $T_m$  defines an endomorphism $T_m$ on $\HH_1(X(N),\partial_N, \Z)$ and we have, for $\alpha$, $\beta\in\sP^1(\Q)$, $T_{m}(\{\alpha,\beta\})=\sum_{r \in R} \{r\alpha,r\beta\}$

\begin{definition}
[Condition $(C_m)$]
 An element
$\sum_M u_M M \in \Z[A_m]$
 satisfies the following condition $C_m$  \cite{MR1322319} if  we have the following equality in  $\C[\sP^1(\Q)]$:
\[
\sum_{M \in [K]} u_M M((\infty)-M(0))=(\infty)-(0)
\]
 for all classes $K \in M_2(\Z)_m \slash {SL}_2(\Z)$.
\end{definition}

 Let $\sum_M u_M M \in \Z[A_m]$ satisfies the condition $C_m$.
The action of the Hecke operator $T_m$ on Manin symbol  $\xi(g) \in \overline{\SL_2(\Z/N\Z)}$ is given by the following  formula \cite{MR1322319}:
\[
T_m (\xi(g)) =\sum_M u_M  \xi(gM),
\]
which is meaningful since the elements of $A_m$ are of determinant $1$ modulo $N$. Similar formulas hold when $m$ is any interger ({\it i.e.} not necessarily congruent to $1$ modulo $N$) but one needs to make a choice in the definition of $T_{m}$.

We turn to a particular family $\Z[A_m]$ that satisfies condition $C_{m}$, provided we impose that $m$ is an odd integer, which is our assumption for the rest of this section.

Consider the following two sets of matrices:
\[
U_m = \{
\left(\begin{smallmatrix}
x & -y\\
y' & x'\\
\end{smallmatrix}\right)
\in M_2(\Z)/ xx' + yy' = m, x \text{ and }  x' \text{ odd},
y \text{ and } y' \text{ even } x > |y|, x' > |y'|\}
\}
\]
and 
\[
V_m = 
\{
\left(\begin{smallmatrix}
x & -y\\
y' & x'\\
\end{smallmatrix}\right)
\in M_2(\Z)/ xx' + yy' = m, x \text{ and }  x' \text{ odd },
y \text{ and } y' \text{ even } x > |y|, x' > |y'|\}
\}.
\]
 For all $m$, we define 
\[
\theta_m=\sum_m U_m-\sum_m V_m=\sum_m u_m^{\theta} M.
\]
The element $\theta_m$ satisfies the condition $(C_m)$ \cite{MR1322319}.  Taken together, these elements satisfy the properties of Hecke operators in $\Z[M_2(\Z)]$ : $\theta_{m}\theta_{m'}=\theta_{mm'}$, provided $m$ and $m'$ are odd and coprime, and the usual recursive formula when $m$ is the power of a prime number \cite{MR1316830}.

Let us denote 
by $\overline{M_2(\Z)}$, the set of matrices in $M_2(\Z)$ modulo multiplication by $\pm 1$. 
For $M=\left(\begin{smallmatrix}
a & b\\
c & d\\
\end{smallmatrix}\right)$, let 
 $\widetilde{M}=\left(\begin{smallmatrix}
d & -b\\
-c & a\\
\end{smallmatrix}\right)$.

Let $H=\left(\begin{smallmatrix}
1 & 1\\
-1 & 1\\
\end{smallmatrix}\right)$ and $c=\left(\begin{smallmatrix}
-1 & 0\\
0 & 1\\
\end{smallmatrix}\right)$.

\begin{prop}\label{Hecke}
[\cite{MR1316830}, Proposition 5, 6]
We have the following relations in
$\Z[\overline{M_2}(\Z)]$:
\begin{itemize}
\item 
$\theta_m c=c \theta_m$
\item 
$\theta_m H=H \widetilde{\theta_m}+ ([1]+[S])\sum_{M \in V_m} [MH-H\widetilde{M}]$, 
\end{itemize}
where 
\[
\widetilde{\theta_m}=\sum_M u_M^{\theta} [\tilde{M}].
\]
\end{prop}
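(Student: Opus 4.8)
The plan is to prove both identities as equalities in the free $\Z$-module $\Z[\overline{M_2(\Z)}]$ by expanding $\theta_m=\sum_{M\in U_m}[M]-\sum_{M\in V_m}[M]$ and matching the resulting classes of integral matrices one by one, keeping track of the coefficient $u_M^{\theta}\in\{-1,0,1\}$. The only features of $U_m$ and $V_m$ I would use are that they are cut out inside the set of determinant-$m$ matrices by the congruences ``$x,x'$ odd, $y,y'$ even'', the equation $xx'+yy'=m$, the strict inequalities $x>|y|$, $x'>|y'|$, and the sign condition separating the two sets. For the commutation $\theta_m c=c\theta_m$ I would argue first, as a warm-up. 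Conjugation by $c=\left(\begin{smallmatrix}-1&0\\0&1\end{smallmatrix}\right)$ sends $\left(\begin{smallmatrix}x&-y\\y'&x'\end{smallmatrix}\right)$ to $\left(\begin{smallmatrix}x&y\\-y'&x'\end{smallmatrix}\right)$, that is, it acts by $(y,y')\mapsto(-y,-y')$ while fixing $x,x'$. Since every defining condition of $U_m$ and $V_m$ depends on $y,y'$ only through $|y|,|y'|$ and the product $yy'$, this involution preserves each set; hence $c\theta_m c=\theta_m$, and multiplying by $c$ on the right (using $c^{2}=\mathrm{Id}$) gives $c\theta_m=\theta_m c$. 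The one point to check is that the sign condition distinguishing $U_m$ from $V_m$ is itself invariant under $(y,y')\mapsto(-y,-y')$, so that no cross terms appear.

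The identity for $H$ is the real content. I would begin from the explicit computations $MH=\left(\begin{smallmatrix}x+y&x-y\\y'-x'&y'+x'\end{smallmatrix}\right)$ and $H\widetilde M=\left(\begin{smallmatrix}x'-y'&x+y\\-x'-y'&x-y\end{smallmatrix}\right)$, both of determinant $2m$, and look for the partner $M'$ with $H\widetilde{M'}=\pm MH$. Solving $\widetilde{M'}=H^{-1}MH$ gives the candidate $M'=H^{-1}\widetilde M H$; because $H^{-1}=\tfrac12\left(\begin{smallmatrix}1&-1\\1&1\end{smallmatrix}\right)$, the matrix $M'$ a priori has half-integral entries, but the parity conditions $x,x'$ odd and $y,y'$ even are exactly what force $M'\in M_2(\Z)$. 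The plan is then to show that $M\mapsto M'$ is a coefficient-preserving bijection between the bulk of $U_m\sqcup V_m$ and its image, so that in the difference $\theta_m H-H\widetilde{\theta_m}=\sum_M u_M^{\theta}([MH]-[H\widetilde M])$ all interior terms cancel in pairs. What survives is governed by whether $M'$ again lies in the region $\{x>|y|,\ x'>|y'|\}$: the strict inequalities can degenerate under $M\mapsto M'$, and I expect the parity constraints to pin the degenerate cases precisely on $V_m$. For those boundary matrices the class $[MH]$ cannot be rewritten as a single $[H\widetilde{M'}]$ with $M'$ in the region, and one must invoke the Manin-type relation on the output symbols; this is where the symmetrizing operator $[1]+[S]$ enters, assembling the uncancelled contributions into $([1]+[S])\sum_{M\in V_m}[MH-H\widetilde M]$.

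The main obstacle is exactly this boundary bookkeeping. Three things must be reconciled simultaneously: (i) that the interior matching $M\mapsto H^{-1}\widetilde M H$ is a bijection carrying the coefficient $u_M^{\theta}$ correctly, so that the $U_m$ and $V_m$ bulk terms cancel as claimed; (ii) that the degeneration of the strict inequalities $x>|y|$, $x'>|y'|$ occurs only along $V_m$, which rests delicately on the evenness of $y,y'$ and oddness of $x,x'$; and (iii) that every sign comes out right after passing to $\overline{M_2(\Z)}$, where the identification $[M]=[-M]$ collapses distinctions that matter for the matching. I would organise the argument by stratifying $U_m\sqcup V_m$ according to the signs of $x\pm y$ and $x'\pm y'$ and verifying the correspondence stratum by stratum, thereby reducing the global identity to a finite list of local checks; the appearance of $[1]+[S]$ would be traced to the two ways in which a boundary geodesic closes up, i.e. to the relation $\xi(g)+\xi(gS)=0$ among Manin symbols.
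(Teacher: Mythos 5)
You should first be aware that the paper offers no proof of this proposition to compare against: it is imported wholesale from Merel \cite{MR1316830} (Propositions 5 and 6), and the authors state explicitly that they will use only the two displayed properties, never the explicit form of $U_m$ and $V_m$. Your attempt therefore has to stand on its own as a reconstruction of Merel's combinatorial proof, and it does not. The first gap is in the input data. The paper's two displayed definitions of $U_m$ and $V_m$ are word-for-word identical (a typo); taken literally they give $\theta_m=0$, which satisfies neither condition $C_m$ nor the second identity, so the definitions must be repaired from the source. Your repair --- same region, ``a sign condition separating the two sets'' --- is not the right one: in Merel's definition the two sets occupy \emph{different} regions, with the roles of the diagonal and antidiagonal entries interchanged (the inequalities $x>|y|$, $x'>|y'|$ for $U_m$ become $y>|x|$, $y'>|x'|$ for $V_m$, with the parity conditions adjusted accordingly). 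This undermines both halves of your argument. Your warm-up proof of $\theta_m c=c\theta_m$ rests on the claim that every defining condition depends on $y,y'$ only through $|y|,|y'|$ and $yy'$; that holds for $U_m$, but for a $V_m$ cut out by $y>|x|$, $y'>|x'|$ the involution $(y,y')\mapsto(-y,-y')$ leaves the region, and stability only holds after composing with $-1$ (i.e.\ $M\mapsto -cMc$), using the quotient $\overline{M_2(\Z)}$. More seriously, the heart of your second argument --- that the degeneration of the strict inequalities under $M\mapsto H^{-1}\widetilde{M}H$ is ``pinned precisely on $V_m$'' --- is calibrated to a $V_m$ lying inside the same region $\{x>|y|,\ x'>|y'|\}$ as $U_m$, which is not the case; you cannot identify the boundary contribution with a set you have not correctly described.

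Second, even granting the definitions, what you have written is a plan, not a proof. The computations you actually carry out (the products $MH$ and $H\widetilde{M}$, the integrality of $H^{-1}\widetilde{M}H$ forced by parity) are correct, but the three items you yourself label ``the main obstacle'' --- that the matching is a coefficient-preserving bijection off a degeneracy locus, that this locus is exactly $V_m$, and that the uncancelled terms assemble into $([1]+[S])\sum_{M\in V_m}[MH-H\widetilde{M}]$ --- constitute the entire content of the proposition, and none of them is established. Moreover, your proposed mechanism for producing the factor $[1]+[S]$ is a category error. The proposition asserts an identity in the free module $\Z[\overline{M_2(\Z)}]$, in which there is no relation $[M]+[SM]=0$; appealing to the Manin relation $\xi(g)+\xi(gS)=0$ ``on the output symbols'' could at best prove the image of the identity under $\xi$, which is strictly weaker than the statement and weaker than what the paper needs: in Section 5 the identity is applied not to Manin symbols but to the function $\widehat{f}$, and there the $([1]+[S])$-term is disposed of by the anti-invariance $\widehat{f}\circ S=-\widehat{f}$, not by any homological relation. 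The factor $[1]+[S]$ has to fall out of the matrix combinatorics themselves, exactly as the bulk cancellation does.
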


We will not use the explicit forms of the Diophantine sets $U_m$ and $V_m$ but 
we will make an use of the condition $C_m$ and the properties of $\theta_m$
stated in the proposition. The latter one is essential, but we only need to remember that $\theta_m H-H \widetilde{\theta_m}$ is a left multiple of $[1]+[S]$.
It will be useful to note that the support of $\theta_{m}$ is contained in the set of matrices congruent to the identity modulo $2$.

\section{Another expression of the function $F$}
Let $\overline{B_1}:\R \rightarrow  \R$ be the first Bernoulli function. This is a  periodic with period $1$ and for $x \in (0, 1)$, $\overline{B_1}$ is defined by 
$\overline{B_1}(x) = x -\frac{1}{2}$ and $B_1( 0) = 0$.  Let $e(x)=e^{2i\pi x}$.
We still denote by
$\overline{B_1}$ and $e$ functions defined on
the quotient  $\R \slash \Z$. 
Recall that $H=\left(\begin{smallmatrix}
1 & 1\\
-1 & 1\\
\end{smallmatrix}\right)$.
 For $P \in (\Z/N\Z )^2$, we denote by $P^0$ the unique  ́element 
of  $(\Z/2N\Z)^2$ which coincides with $P$ modulo $N$ and to $(1,1)$ modulo $2$. Recall that $H=\left(\begin{smallmatrix}
1 & 1\\
-1 & 1\\
\end{smallmatrix}\right)$.
\begin{prop}
\label{Alternative}
For $P \in (\Z/N\Z)^2$,  we have 
\[
  F (P ) =\sum_{s=(s_1,s_2) \in (\Z/2N\Z)^2} e(\frac{s(HP)^0}{2N}) \overline{B_1}(\frac{s_1}{2N}) \overline{B_1}(\frac{s_2}{2N}).
\]
\end{prop}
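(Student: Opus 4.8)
The plan is to exploit the fact that the pairing $s(HP)^0 = s_1 a + s_2 b$, where we write $(HP)^0 = (a,b) \in (\Z/2N\Z)^2$, enters only through the exponential factor, so that $e(\frac{s(HP)^0}{2N}) = e(\frac{s_1 a}{2N})e(\frac{s_2 b}{2N})$ and the double sum factorizes. Thus the right-hand side equals $G(a)G(b)$, where
\[
G(c)=\sum_{s\in\Z/2N\Z} e\Big(\frac{sc}{2N}\Big)\,\overline{B_1}\Big(\frac{s}{2N}\Big).
\]
The whole proof reduces to evaluating this finite Fourier transform of the Bernoulli function $\overline{B_1}$.

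The key computational step is to prove that $G(c) = -\frac{i}{2}\cot\big(\frac{\pi c}{2N}\big)$ whenever $c$ is odd. Since $2N$ is even and $c$ is odd, $c\not\equiv 0 \pmod{2N}$, so $\zeta := e(c/2N)$ is a nontrivial $2N$-th root of unity. Using $\overline{B_1}(0)=0$ and $\overline{B_1}(s/2N)=s/2N-\frac12$ for $1\le s\le 2N-1$, I would split $G(c)$ into the two sums $\frac{1}{2N}\sum_{s} s\zeta^s$ and $-\frac12\sum_{s}\zeta^s$. The identities $\sum_{s=1}^{2N-1}\zeta^s = -1$ and $\sum_{s=0}^{2N-1}s\zeta^s = \frac{2N}{\zeta-1}$ give $G(c) = \frac{1}{\zeta-1}+\frac12 = \frac{\zeta+1}{2(\zeta-1)}$; writing $\zeta = e^{i\pi c/N}$ and applying the half-angle identity $\frac{\zeta+1}{\zeta-1} = -i\cot(\frac{\pi c}{2N})$ yields the claimed formula. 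Note that $\cot(\frac{\pi c}{2N})$ depends only on $c$ modulo $2N$, consistently with $G$ being defined on $\Z/2N\Z$.

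It then remains to identify $(HP)^0$ and match the trigonometric expressions. Since $H=\left(\begin{smallmatrix}1 & 1\\ -1 & 1\end{smallmatrix}\right)$ and $P$ is represented by $(x,y)$ with $x-y$ odd, one has $HP=(x+y,\,y-x)$ with both entries odd, so these entries already satisfy the normalization defining $(HP)^0$ and we may take $a=x+y$, $b=y-x$. Hence
\[
G(a)G(b) = -\frac14\,\cot\Big(\frac{\pi(x+y)}{2N}\Big)\cot\Big(\frac{\pi(y-x)}{2N}\Big).
\]
Applying the product-to-sum formulas with $A=\frac{\pi(x+y)}{2N}$ and $B=\frac{\pi(y-x)}{2N}$, whose difference and sum are $\frac{\pi x}{N}$ and $\frac{\pi y}{N}$, turns this into $-\frac14\,\frac{\cos(\pi x/N)+\cos(\pi y/N)}{\cos(\pi x/N)-\cos(\pi y/N)} = F(P)$, as desired.

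The main obstacle is the clean evaluation of $G(c)$: one must handle the $\overline{B_1}(0)=0$ convention and the boundary contribution in the two geometric sums correctly, and check that the resulting cotangent is genuinely well defined modulo $2N$. Once $G(c)$ is in hand, the factorization of the double sum and the final trigonometric identity are routine.
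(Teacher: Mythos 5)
Your proof is correct and follows essentially the same route as the paper's: factor the double sum into two one-variable finite Fourier transforms of $\overline{B_1}$, evaluate each as $\frac{1}{\zeta-1}+\frac{1}{2}=\frac{\zeta+1}{2(\zeta-1)}=-\frac{i}{2}\cot(\frac{\pi c}{2N})$, and conclude with the product-to-sum identity. If anything, your sign bookkeeping is cleaner: the paper's proof identifies $(HP)^0$ with $(x+y,x-y)$ rather than the correct $(x+y,y-x)$ and then makes a compensating sign slip in the final cotangent-to-cosine step, whereas your version handles both points correctly.
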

\begin{proof}
Let $(x,y) \in \Z^2$ be a representative such that $(x-y)$ is odd. 
For $(u,v)=(HP)^0=(x+y,x-y)+2N \Z^2 \in (\Z \slash 2 N \Z)^2$, 
Consider the sum $\sum_{k=0}^{2N-1} e(-\frac{ku}{2N})\overline{B_1}(\frac{k}{2N})$. 
We have 
\begin{eqnarray*}
\sum_{k=0}^{2N-1} e(-\frac{ku}{2N})\overline{B_1}(\frac{k}{2N})
&= & \sum_{k=1}^{2N-1} e(-\frac{ku}{2N})[\frac{k}{2N}-\frac{1}{2}]\\
&= & \frac{1}{2N} \sum_{k=0}^{2N-1} e(-\frac{ku}{2N}) k-\frac{1}{2}\sum_{k=1}^{2N-1} e(-\frac{ku}{2N})\\
&= & \frac{1}{e(-\frac{u}{2N})-1}+\frac{1}{2}\\
&= & \frac{e(-\frac{u}{2N})+1}{2(e(-\frac{u}{2N})-1)}\\
&= &-\frac{i}{2} \cot(-\frac{ \pi u}{2N}).
\end{eqnarray*}
The last equality follows from $1-e(t)=-2i \sin(\pi t) e(\frac{t}{2})$  
and $1+e(t)=2 \cos(\pi t) e(\frac{t}{2})$
for all $t \in \R$.
 We deduce the following 
equality:
\begin{eqnarray*}
\sum_{k_1=0}^{2N-1} \sum_{k_2=0}^{2N-1} e (- \frac{k_1u+k_2 v}{2N}) 
\overline{B_1}(\frac{k_1}{2N})\overline{B_1}(\frac{k_2}{2N})
&= & \frac{e(-\frac{u}{2N})+1}{2(e(-\frac{u}{2N})-1)}\frac{e(-\frac{v}{2N})+1}{2(e(-\frac{v}{2N})-1)}\\
&= & \frac{1}{4}  [\frac{e(-\frac{u}{2N})+1}{e(-\frac{u}{2N})-1}][\frac{e(-\frac{v}{2N})+1}{e(-\frac{v}{2N})-1}]\\
&= &- \frac{1}{4} \cot ( \frac{\pi(x+y)}{2N}) \cot( \frac{\pi(x-y)}{2N}) \\
&= &-\frac{1}{4}[ \frac{\cos (\frac{\pi x}{N})+ \cos (\frac{\pi y}{N})}{ \cos (\frac{\pi x}{N})- \cos (\frac{\pi y}{N}) }]. \\
\end{eqnarray*} 
\end{proof}

\section{The element $\theta_m$ and the function $F$}
We will prove a main result in this section. For $(x,y) \in \R^2$, 
set $f(x,y) =: \overline{B}_1 (x)\overline{B}_1 (y)$.
Consider the Fourier series development of the Bernoulli number $\overline{B}_1$. 
For $x \in \R$, recall the Fourier expansion of $\overline{B_1}$:
\[
-\frac{1}{2 \pi i}\sum_{n \neq 0}^{\infty} \frac{1}{n} e(nx)=\overline{B_1}(x).
\]
Hence, we have
\[
f(x,y)= \overline{B_1}(x)\overline{B_1}(y)=\frac{1}{(2 \pi i)^2}\
\sum_{n \neq 0}^{\infty} \sum_{m \neq 0}^{\infty} \frac{1}{n m} e(nx+my).
\]

\begin{prop}
\label{Heckeaction}
 Let $l$ be a prime number. Wuppose $\sum_L u_L [L] \in \Z[A_l]$  satisfies 
 the condition $C_l$ and 
 $\sum_L u_L [Lc] = \sum_L u_L [cL]$
 in $[\overline{M}_2 (\Z)]$.
 We then have 
\[
\sum_L u_L f(sL)=lf(s)+f(ls)
\]
for all $s \in \Z^2$.
\end{prop}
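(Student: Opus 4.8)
The plan is to substitute the Fourier expansion of $f$ and to reduce the asserted identity to a comparison of Fourier coefficients, frequency by frequency. Writing $f(x,y)=\frac{1}{(2\pi i)^2}\sum_{n,m\neq 0}\frac{1}{nm}e(nx+my)$ and expanding each term $f(sL)$, the linear character $e$ evaluated on $sL$ converts a pair $(n,m)$ of nonzero integers into the frequency $w\in\Z^2$ determined by $w^{T}=L\,(n,m)^{T}$. For a fixed $L$ with $\det L=l$ this assignment $(n,m)\mapsto w$ is injective, and $L$ contributes to the frequency $w$ exactly when $(n,m)^{T}=L^{-1}w^{T}$ is integral, i.e. when $w$ lies in the column lattice $L\Z^2$. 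Thus the proposition reduces to showing, for every $w\in\Z^2$, that the coefficient
\[
C(w)=\sum_{L}\ \sum_{\substack{(n,m)^{T}=L^{-1}w^{T}\\ nm\neq 0}}\frac{u_L}{nm}
\]
agrees with the coefficient of $e(w\cdot s)$ in $(2\pi i)^2\bigl(lf(s)+f(ls)\bigr)$.

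First I would record the right-hand side. The term $lf(s)$ contributes $l/(w_1w_2)$ for every $w$ with $w_1w_2\neq 0$, whereas $f(ls)$, whose frequencies are the vectors $w=lv$, contributes $l^2/(w_1w_2)$ precisely when $l\mid w_1$ and $l\mid w_2$, and neither contributes on the coordinate axes. The decisive combinatorial input is then that the number of index-$l$ sublattices of $\Z^2$ containing a given $w$ (equivalently, of right $\SL_2(\Z)$-classes $K$ indexing condition $C_l$ whose column lattice contains $w$) equals $1$ when $l\nmid\gcd(w_1,w_2)$ and $l+1$ when $l\mid\gcd(w_1,w_2)$. This already reproduces the two regimes above provided one establishes the \emph{per-class identity}: for each class $K$ whose column lattice contains $w$,
\[
\sum_{L\in[K]}\frac{u_L}{n_L\,m_L}=\frac{l}{w_1w_2},\qquad (n_L,m_L)^{T}=L^{-1}w^{T}.
\]
Indeed, summing $l/(w_1w_2)$ over the $1$ (resp. $l+1$) relevant classes yields exactly $l/(w_1w_2)$ (resp. $(l+l^2)/(w_1w_2)$).

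To prove the per-class identity I would write $L=B\gamma$ with $B$ a fixed basis matrix of the sublattice attached to $K$ and $\gamma\in\SL_2(\Z)$, so that $(n_L,m_L)^{T}=\gamma^{-1}B^{-1}w^{T}$ runs through the $\SL_2(\Z)$-orbit of $v_0=B^{-1}w^{T}$ weighted by the $u_L$. The two linear forms $w\mapsto n_L$ and $w\mapsto m_L$ have kernels the lines spanned by the two columns of $L$, i.e. the cusps $L(\infty)$ and $L(0)$; performing the partial-fraction splitting of $1/(n_Lm_L)$ coming from the two-term structure of the Fourier series of $\overline{B_1}$ should convert the orbit sum into column data to which condition $C_l$, in the form $\sum_{M\in[K]}u_M\bigl(M(\infty)-M(0)\bigr)=(\infty)-(0)$, applies and telescopes it to the single value $l/(w_1w_2)$. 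The hypothesis $\sum_L u_L[Lc]=\sum_L u_L[cL]$ enters through the oddness $f(sc)=-f(s)$ of $f$ under $c=\bigl(\begin{smallmatrix}-1&0\\0&1\end{smallmatrix}\bigr)$: it furnishes an involution on the summation under which $1/(n_Lm_L)$ is antisymmetric along the coordinate axes, forcing $C(w)=0$ there and, more importantly, symmetrizing the only conditionally convergent series so that the rearrangement by frequency is legitimate.

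The main obstacle I anticipate is precisely this last point: the double series $\sum_{n,m\neq 0}(nm)^{-1}e(\cdots)$ is only conditionally convergent, so the regrouping by frequency must be justified by the $c$-symmetrization before any coefficient comparison is meaningful. Intertwined with this is the per-class telescoping: aligning the partial-fraction decomposition of $1/(n_Lm_L)$ with the boundary identity provided by condition $C_l$ — and checking that on the sublattice $l\Z^2$ the count jumps from one class to $l+1$ so as to produce exactly the surplus term $f(ls)$ — is the delicate computational heart of the argument.
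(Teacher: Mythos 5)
Your proposal is correct and follows essentially the same route as the paper's proof: expanding $f$ in its Fourier series, comparing coefficients of $\sum_L u_L f(sL)$ against $lf(s)+f(ls)$ frequency by frequency, using the partial-fraction decomposition of $1/(n_L m_L)$ together with condition $C_l$ to telescope each $\SL_2(\Z)$-class to $l/(w_1w_2)$, counting that a frequency lies in one class lattice (resp. $l+1$) according as $l\nmid\gcd(w_1,w_2)$ (resp. $l\mid\gcd(w_1,w_2)$), and invoking the $c$-commutation plus oddness of $f$ to kill the coefficients on the coordinate axes. The only difference is cosmetic (column versus row lattice conventions, and your explicit attention to the conditional-convergence issue, which the paper handles implicitly by working at the level of Fourier coefficients).
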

\begin{proof}
Consider the functions
\[
f_g : (x, y) \rightarrow -4 \pi^2 \sum_L u_L f ((x, y)L) 
\]
and 
\[
f_d :(x, y) \rightarrow -4 \pi^2 (l f (x, y)+f (lx, ly)).  
\]
They are periodic functions with period $1$ in both variables.

To prove $f_g=f_d$, it is enough to prove the equality of Fourier coefficients 
\[
c_{n,m} (f_g )=c_{n,m} (f_d ) 
\]
for $(n, m) \in \Z^2$.

Now, 
\[
f_d(x,y)=-4 \pi^2 (l f (x, y)+f (lx, ly))=[\sum_{n \neq 0}^{\infty} \sum_{m \neq 0}^{\infty} \frac{l}{n m} e(nx+my)+\sum_{n \neq 0}^{\infty} \sum_{m \neq 0}^{\infty} \frac{1}{n m} e(nlx+mly)].
\]

We first consider the coefficients $c_{n,m}(f_d)$. We first note that 
$c_{n,0}(f_d) = c_{0,m} (f_d) = 0$. Moreover we have
\begin{equation*}
c_{n,m} (f_d)  =
\begin{cases}
\frac{l}{nm} & \text{If $(n, m) \notin (l\Z)^2$ and $nm \neq 0$,} \\
\frac{l}{nm} + \frac{l^2}{nm} &  \text{If $(n, m) \in (l\Z)^2$ and $nm \neq 0$.} \\
\end{cases}
\end{equation*}

We now examine the coefficients of $c_{n,m}(f_g)$. We note that  
$\sum_L u_L Lc = c \sum_L u_L L$ 
and $f_g$ is an odd function in both coordinate. We deduce that 
$c_{n,0}(f_g ) = c_{0,m}(f_g)=0$.
Suppose now that $nm \neq 0$ and $L=\left(\begin{smallmatrix}
a & b\\
c & d\\
\end{smallmatrix}\right) \in A_l$. We deduce that 
\[
f((x,y)L)= \sum_{(n,m) \in \Z^2, nm \neq 0} \frac{e((ax+cy)n+(bx+dy)m)}{nm}
=\sum_{(n,m) \in \Z^2, nm \neq 0} \frac{e((an+bm)x+(cn+dm)y)}{nm}.
\]
For $(n', m')=(an+bm, cn+dm) \in \Z^2 L^t$,  we  then have  
\[
f((x,y)L)= \sum_{(n,m) \in \Z^2 L^t, (dn'-bm'),(-cn'+am')\neq 0} \frac{l^2e(xn'+ym')}{(dn'-bm')(-cn'+am')}.
\]
We deduce that 
\[
c_{m,n}(f_g)=\sum_{L} \frac{u_Ll^2}{(dn-bm)(-cn+am)}
\]
with the summation over all matrices $L$ in $A_l$ such that $(n,m) \in \Z^2 L^t$ and 
$(dn-bm)(-cn+am) \neq 0$. We deduce the following relation for all $\left(\begin{smallmatrix}
a & b\\
c & d\\
\end{smallmatrix}\right) \in A_l$, 
\[
\frac{1}{(dn-bm)(-cn+am)}=\frac{1}{ad-bc}(\frac{1}{m(n-\frac{b}{d}m)}-\frac{1}{m(n-\frac{a}{c}m)})
=\frac{1}{l}(\frac{1}{m(n-L(0)m)}-\frac{1}{m(n-L (\infty)m)}).
\]
We also deduce that 
\[
c_{n,m}(f_g)=\sum_{\alpha \in A_l \slash \SL_2(\Z)} \sum_L l u_L(\frac{1}{m(n-L(0)m)}-\frac{1}{m(n-L (\infty)m)}),
\]
with the second summation over all matrices $L$ in $A_l$ such that $L \in \alpha$, $(n,m) \in \Z^2 L^t$ and $(dn-bm)(-cn+am) \neq 0$. The set $\Z^2 L^t$ does not depend on the class of $L $  in $\SL_2(\Z) \Z^2 =\Z^2$.

Now, suppose that $\Z^2 L^t=\Z^2 \alpha^t$. We also assume that $dn-bm=0$ if and only 
if $L0=\frac{n}{m}$ and $-cn+am=0$ if and only of $L(\infty)=\frac{n}{m}$.
We deduce that the condition $C_l$ gives the following equality:
\[
c_{n,m}(f_g)=\sum_{\alpha \in A_l \slash \SL_2(\Z)} \frac{l}{nm}.
\]
The set of classes in $A_l/\SL_2(\Z)$ are in bijection with the subgroups of finite 
index $l$ of $\Z^2$.
If $\alpha$ and $\alpha'$ are two distinct elements of $A_l/ \SL_2(\Z)$, we have
\[
\Z^2 \alpha^t \cap \Z^2 (\alpha')^t=l^2. 
\]
Also, we have $|A_l \backslash \SL_2(\Z)|=l+1$. Every element of $\Z^2 -l\Z^2$ corresponds 
 to an unique subgroup of index $l$  in $\Z^2$ and an unique class of $\Z^2 \alpha^t$ with 
 $\alpha \in A_l \backslash \SL_2(\Z)$. We now calculate $c_{n,m}(f_g)$. We have 
 \[
 c_{n,m}(f_g)=\frac{l}{nm}
 \]
 if $(n,m) \notin l \Z^2$ and 
 \[
 c_{n,m}(f_g)=\sum_{\alpha \in A_l \slash \SL_2(\Z)} \frac{l}{nm}=  (l+1)\frac{l}{nm}
 \]
 if $(n,m) \in l \Z^2$. We thus prove the theorem. 
\end{proof}
\begin{remark}
Let $m$ be a positive integer. Suppose $\sum_M u_M M \in \Z[A_m]$ satisfies the condition 
$(C_m)$ and such that $\sum_M u_M Mc=c \sum_M u_M M$. For $s \in \Z^2$, by imitating 
the above method we can prove that 
\[
\sum_M u_M f(sM)=\sum_{d \mid m} \frac{m}{d} f(ds). 
\]

We note that the condition  $\sum_M u_M [Mc]=\sum_M u_M [cM]$ is not essential. In fact, if $\sum_M u_M [M]$ 
satisfies the condition $(C_m)$ and not necessarily commute with $c$.  Then the element 
\[
\frac{1}{2} \sum_M u_M ([M] +[cMc]) \in \Z[ \frac{1}{2}] [M_2(\Z)]
\]
satisfy the condition $(C_m)$ and it commutes with $c$. 
\end{remark}

For 
$P=\left(\begin{smallmatrix}
x\\
y \\
\end{smallmatrix}\right) \in (\Z/2N\Z)^2$,
consider the function 
\[
\widehat{f}(P)= \sum_{s \in (\Z/2N \Z)^2} e(\frac{ s P}{2N}) f(\frac{s}{2N}).
\]
 
Observe that $\widehat{f}(P)+\widehat{f}(S P)=0$ again since 
\[
\overline{B}_1(-\frac{s}{2N})=-\overline{B}_1(\frac{s}{2N}).
\]
\begin{prop}
\label{ccommute}
Let $l$ be an odd prime number and let $\sum_L u_L [L] \in \Z[A_l]$ satisfying the 
condition $C_l$ such that 
$
\sum_L u_L [L c]=c \sum_L u_L  [cL]
$
in $\Z [M_2(\Z)]$. For $P \in (\Z/N\Z)^2$, we have 
\[
\sum_L u_L \widehat{f}(\tilde{L} P)=l \widehat{f}(lP)+\widehat{f}(P). 
\]
\end{prop}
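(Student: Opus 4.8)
The plan is to deduce this identity from the pointwise identity of Proposition~\ref{Heckeaction} by applying the finite Fourier transform on $(\Z/2N\Z)^2$. First I would record that, under the stated hypotheses, the proof of Proposition~\ref{Heckeaction} in fact establishes the equality of periodic functions
\[
g(u):=\sum_L u_L f(uL)=lf(u)+f(lu)\qquad(u\in\R^2),
\]
the $c$-commutation hypothesis being precisely what forces the degenerate Fourier coefficients $c_{n,0}$ and $c_{0,m}$ to vanish there. In particular this holds at every point $u=s/(2N)$ with $s\in\Z^2$, using the convention $\overline{B_1}(\mathrm{integer})=0$, the value to which the Fourier series of $\overline{B_1}$ converges at its jumps.

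Next I would apply the transform $h\mapsto\widehat h(P)=\sum_{s}e(sP/2N)\,h(s/2N)$ to $g$ and compute $\widehat g(P)$ in two ways. Evaluating it from the right-hand side $lf+f(l\cdot)$ and reindexing the second sum by $s\mapsto l^{-1}s$ (a bijection of $(\Z/2N\Z)^2$, since $l$ is invertible modulo $2N$) gives
\[
\widehat g(P)=l\,\widehat f(P)+\widehat f(l^{-1}P).
\]
Evaluating it from the left-hand side $\sum_L u_L f(\cdot\,L)$, I would interchange the two finite sums and, for each $L$, reindex by $s\mapsto sL$; this turns $\sum_s e(sP/2N)f(sL/2N)$ into $\widehat f(L^{-1}P)$, and since $L\widetilde L=lI$ one has $L^{-1}P=\widetilde LP/l$, whence
\[
\widehat g(P)=\sum_L u_L\,\widehat f\!\Big(\tfrac{\widetilde LP}{l}\Big).
\]

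Comparing the two expressions for $\widehat g(P)$ and then substituting $P\mapsto lP$ (so that $l^{-1}(lP)=P$ and $\widetilde L(lP)/l=\widetilde LP$) yields
\[
\sum_L u_L\,\widehat f(\widetilde LP)=l\,\widehat f(lP)+\widehat f(P),
\]
which is the assertion. The main obstacle, and the only point at which the arithmetic of $l$ intervenes, is the legitimacy of the two changes of variable $s\mapsto l^{-1}s$ and $s\mapsto sL$ on $(\Z/2N\Z)^2$: these require $l$, hence each $L\in A_l$, to be invertible modulo $2N$, which holds because $l$ is odd and coprime to $N$ in the situation where the statement is used. I would also need to keep the lifts of $P$, $lP$ and $\widetilde LP$ from $(\Z/N\Z)^2$ to $(\Z/2N\Z)^2$ mutually consistent, since $\widehat f$ does not descend to $(\Z/N\Z)^2$, and to verify that the $c$-commutation hypothesis as stated here is indeed the form needed to invoke Proposition~\ref{Heckeaction}.
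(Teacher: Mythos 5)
Your proof is correct and takes essentially the same route as the paper's: both transfer the pointwise identity of Proposition~\ref{Heckeaction} through the finite Fourier transform on $(\Z/2N\Z)^2$, using the changes of variable $s\mapsto sL$ and $s\mapsto ls$ (legitimate because $l$, hence each $L\in A_l$, is invertible modulo $2N$) together with the relation $L\widetilde{L}=lI$. The paper merely organizes the bookkeeping differently --- substituting $s=tL$ directly inside $\sum_L u_L\widehat{f}(\widetilde{L}P)$ instead of computing $\widehat{g}$ two ways and then specializing $P\mapsto lP$ --- and your explicit attention to the midpoint convention for $\overline{B_1}$ at its jumps makes precise a point the paper leaves implicit.
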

\begin{proof}
 For $P \in  (\Z/N\Z)^2$, we have 
\[
\sum_L u_L \widehat{f}( \widetilde{L}P)=\sum_L u_L \sum_{s \in (\Z/2N\Z)^2} e(\frac{s \widetilde{L}P}{2N})
f(\frac{s}{2N}).
\]
By abuse of notation, we consider $f$ as a function on $(\Z/2N\Z)$ defined by passing to the quotients.

For $N$ co-prime to $l$, we make a change of variable $tL=s$ on $(\Z/N\Z)^2$. 
By using Prop \ref{Heckeaction} and the relation 
$L  \tilde{L}=\left(\begin{smallmatrix}
l & 0\\
0 & l\\
\end{smallmatrix}\right)$, we have the following equality:
\begin{eqnarray*}
\sum_L u_L \widehat{f} (\tilde{L}P) &= & \sum_L u_L \sum_{s \in (\Z/2N\Z)^2} e(\frac{s \widetilde{L} P}{2N})f(\frac{s}{2N})\\
&= & \sum_L u_L \sum_{t \in (\Z/2N\Z)^2} e(\frac{t L\widetilde{L} P}{2N})f(\frac{t L}{2N})\\
  &= & \sum_{s \in (\Z/2N\Z)^2} e(\frac{l t P}{2N}) \sum_L u_L  f(\frac{t L}{2N})\\
            &= & \sum_{t \in (\Z/2N\Z)^2} e(\frac{t l P}{2N}) (l f(\frac{t}{2N})+f(\frac{lt}{2N}))\\
             &= & l \widehat{f}(lP)+\widehat{f}(P).
\end{eqnarray*}
\end{proof}
Theorem~\ref{Eisensteinclass} follows directly from the following proposition.
\begin{prop}
Suppose $l$ is an odd prime number and $P \in (\Z/N\Z)^2$. Then, we have 
\[
\sum_L u_L^{\theta} F(L P)= F(P)+l F (lP). 
\]
\end{prop}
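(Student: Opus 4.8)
The plan is to transport the identity, which lives on $(\Z/N\Z)^{2}$, to the level-$2N$ function $\widehat{f}$, where the eigenvector property is already available from Proposition~\ref{ccommute}, and to bridge the two sides using the commutation relation of \cite{MR1316830} recorded in Proposition~\ref{Hecke}. The first move is purely formal: Proposition~\ref{Alternative} says exactly that $F(Q)=\widehat{f}((HQ)^{0})$ for every $Q\in(\Z/N\Z)^{2}$. Writing $Q_{0}=(HP)^{0}\in(\Z/2N\Z)^{2}$ this gives $F(P)=\widehat{f}(Q_{0})$, and since $l$ is odd one has $l(HP)^{0}=(H(lP))^{0}$, so $F(lP)=\widehat{f}(lQ_{0})$. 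Hence the right-hand side is $F(P)+lF(lP)=\widehat{f}(Q_{0})+l\widehat{f}(lQ_{0})$, and Proposition~\ref{ccommute} applied at the point $Q_{0}$ rewrites this as $\sum_{L}u_{L}^{\theta}\,\widehat{f}(\widetilde{L}Q_{0})$. (This uses $\gcd(l,2N)=1$, which holds in the intended application $l\equiv 1\pmod N$.)

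It then remains to match this against the left-hand side $\sum_{L}u_{L}^{\theta}F(LP)=\sum_{L}u_{L}^{\theta}\widehat{f}((HLP)^{0})$. The crucial structural input is that the support of $\theta_{l}$ consists of matrices congruent to the identity modulo $2$, so $\widetilde{L}\equiv 1\pmod 2$ and therefore $\widetilde{L}(HP)^{0}=(\widetilde{L}HP)^{0}$; likewise the distinguished lift $(HLP)^{0}$ is controlled by the matrix $HL$. Thus both quantities are values of the single linear functional $\lambda:[M]\mapsto\widehat{f}((MP)^{0})$ on $\Z[\overline{M_2}(\Z)]$, the left-hand side being $\lambda(H\theta_{l})$ and the right-hand side being $\lambda(\widetilde{\theta_{l}}H)$. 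The key property of $\lambda$ is that it annihilates every left multiple of $[1]+[S]$: because $S\equiv\left(\begin{smallmatrix}0&1\\1&0\end{smallmatrix}\right)\pmod 2$ fixes the class $(1,1)$, we get $(SMP)^{0}=S(MP)^{0}$, whence $\widehat{f}((SMP)^{0})=-\widehat{f}((MP)^{0})$ and $\lambda(([1]+[S])Z)=0$. Feeding the relation
\[
\theta_{l}H-H\widetilde{\theta_{l}}=([1]+[S])\sum_{M\in V_{l}}[MH-H\widetilde{M}]
\]
of Proposition~\ref{Hecke} into $\lambda$ therefore collapses the error term, giving $\lambda(\theta_{l}H)=\lambda(H\widetilde{\theta_{l}})$, and the goal is to promote this to the equality $\lambda(H\theta_{l})=\lambda(\widetilde{\theta_{l}}H)$ of the two $\widehat{f}$-sums.

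The step I expect to be the genuine obstacle is precisely this reconciliation of the side on which $H$ sits relative to $\theta_{l}$: the functional naturally produces $H\theta_{l}$ out of $F(LP)$, whereas Proposition~\ref{Hecke} is phrased for $\theta_{l}H$ and $H\widetilde{\theta_{l}}$. Bridging them requires combining that relation with the commutations $\theta_{l}c=c\theta_{l}$ and $\widetilde{\theta_{l}}c=c\widetilde{\theta_{l}}$, the identity $\widetilde{H}=cHc$, and the additional symmetries of $\widehat{f}$, namely $\widehat{f}(cQ)=-\widehat{f}(Q)$ and $\widehat{f}(wQ)=\widehat{f}(Q)$ for the swap $w=\left(\begin{smallmatrix}0&1\\1&0\end{smallmatrix}\right)$; these express that $\lambda([gM])=\epsilon(g)\lambda([M])$ for $g$ in the group of signed permutations, where $\epsilon$ records the sign flips. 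Careful tracking of these sign characters, and of the compatibility of the lift $(\cdot)^{0}$ with each matrix in play — which is exactly the point where the oddness of $N$ and the auxiliary level-$2$ structure are indispensable — is the delicate part. Once this bookkeeping is arranged so that $H\theta_{l}$ and $\widetilde{\theta_{l}}H$ are seen to differ by an element killed by $\lambda$, the vanishing of the $[1]+[S]$-term forces the two expressions to coincide, completing the proof.
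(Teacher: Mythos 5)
Your reduction is exactly the paper's: via Proposition \ref{Alternative} both sides of the identity become values of the functional $\lambda\colon[M]\mapsto\widehat{f}((MP)^0)$ on $\Z[\overline{M_2}(\Z)]$; the right-hand side equals $\lambda(\widetilde{\theta_l}H)$ by Proposition \ref{ccommute} together with the lift-compatibility $\widetilde{L}(HP)^0=(\widetilde{L}HP)^0$ for $\widetilde{L}\equiv 1\pmod 2$; the left-hand side is $\lambda(H\theta_l)$; and $\lambda$ annihilates left multiples of $[1]+[S]$. Up to that point you have reproduced the paper's argument.

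The gap is precisely the step you flag and postpone: you never prove $\lambda(H\theta_l)=\lambda(\widetilde{\theta_l}H)$, and the ``sign bookkeeping'' you propose cannot prove it. From the displayed relation of Proposition \ref{Hecke}, read with the usual product $[A][B]=[AB]$, every identity you can generate keeps $\theta_l$ to the \emph{left} of $H$ and $\widetilde{\theta_l}$ to its \emph{right}: conjugation by $c$ fixes both $\theta_l$ and $\widetilde{\theta_l}$ (they commute with $c$) and merely replaces $H$ by $\widetilde{H}=cHc$, turning $\theta_lH=H\widetilde{\theta_l}+([1]+[S])X$ into $\theta_l\widetilde{H}=\widetilde{H}\widetilde{\theta_l}+([1]+[S])cXc$; left multiplication by signed permutations only rescales $\lambda$ by $\pm1$, and right multiplication by anything never moves $\theta_l$ across $H$. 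The one operation that does interchange the sides, the anti-involution $[M]\mapsto[\widetilde{M}]$, converts the error $([1]+[S])X$ into $\widetilde{X}([1]+[\widetilde{S}])=\widetilde{X}([1]+[S])$, a \emph{right} multiple of $[1]+[S]$, which $\lambda$ does not kill: $\lambda(Z([1]+[S]))=\sum_h z_h\bigl(\widehat{f}((hP)^0)+\widehat{f}((hSP)^0)\bigr)$ involves the unrelated points $hP$ and $hSP$, so no cancellation occurs. Hence the laterality issue is not bookkeeping; the statement you need is formally independent of the one you start from. The paper's own proof does no such bridging either: it invokes the commutation relation of \cite{MR1316830} directly in the form actually used, namely $H\theta_l=\widetilde{\theta_l}H+([1]+[S])\sum_M a_M[M]$, with $H$ on the left and the error a left multiple of $[1]+[S]$ (this is how the formula of Proposition \ref{Hecke} is applied there), and then finishes exactly as you describe --- kill the error by $S$-antisymmetry, pass $\widetilde{L}$ through the lift, apply Proposition \ref{ccommute}. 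To complete your proof you must quote, or re-prove from the Diophantine description of $\theta_l$, Merel's relation in that form; it cannot be recovered from the transcribed version by the symmetries you list.
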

\begin{proof}
From Proposition~\ref{Alternative}, we have:
\[
F(P)=\widehat{f} ((HP)^0).
\]
Thus
\[
\sum_L u_L^{\theta} F(L P)=\sum_L  u_L^{\theta} \widehat{f} ((HLP)^0).
\]
We use the formula expressed in proposition ~\ref{Hecke}:
\[
\theta_l H=H \widetilde{\theta_l}+ ([1]+[S])\sum_{M \in V_l} [MH-H\widetilde{M}]=H \widetilde{\theta_l}+ ([1]+[S])\sum_{M}a_{M}[M],
\]
where $M$ runs over matrices congruent to the identity modulo $2$.
Thus we obtain:

\begin{eqnarray*}
\sum_L u_L^{\theta} F(LP) &= & \sum_L u_L^{\theta} \widehat{f} ((H L P)^0) \\
         &= & \sum_L u_L^{\theta} \widehat{f} (\widetilde{L} H(P)^0)+\sum_{M}a_{M} (\widehat{f} (MP)^0)+\widehat{f} (SMP)^0)).
\end{eqnarray*}
We use the fact that $(SMP)^{0}=S(MP)^{0}$ and the antiinvariance of $f$ under $S$. Hence the last term vanishes. We can pursue the calculation using the property of $\widetilde{f}$ established in the previous proposition and the fact that, for any 
$P \in (\Z/N\Z)^2$ and $L \in R$ congruent to the identity modulo $2$,  we have $L P^0=(LP)^0$

\begin{eqnarray*}
\sum_L u_L^{\theta} F(LP) &= &\sum_L u_L^{\theta} \widehat{f}(\tilde{L}H P)^0)\\
         &= & \sum_L u_L^{\theta} \widehat{f} (\tilde{L}(H P)^0)\\
           &= & \widehat{f} ((H P)^0)+l \widehat{f} (l(H P)^0)\\
             &= & F(P)+lF(l P).
\end{eqnarray*}
\end{proof}

\section{Eisenstein eigenvectors}
We now prove  the  Theorem~\ref{Eisensteinclass}. 
\begin{prop}
\label{Heckeq}
Suppose $P \in (\Z /N\Z)^2$ and $l$ be an odd prime number congruent to $1$ modulo $N$. 
On $\HH_1(X(N), \partial_N, \Z)$, we have 
\[
T_l(\sE_P)= (l+1) \sE_P.
\]
\end{prop}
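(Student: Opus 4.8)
The plan is to compute $T_l(\sE_P)$ directly from its definition as a linear combination of Manin generators, using the description of the Hecke action on Manin symbols recalled earlier, and then to reduce the resulting inner sum to the identity established in the preceding proposition. The hypothesis $l \equiv 1 \pmod N$ enters first here: every matrix $L$ in the support of $\theta_l$ has $\det L = l \equiv 1 \pmod N$, so its reduction modulo $N$ lies in $\SL_2(\Z/N\Z)$, which is precisely what makes the formula $T_l(\xi(g)) = \sum_L u_L^\theta\, \xi(gL)$ meaningful. Writing out the definition I would obtain
\[
T_l(\sE_P) = \sum_{\gamma \in \overline{\SL_2(\Z/N\Z)}} \overline{F}(\gamma^{-1}P)\, T_l(\xi(\gamma)) = \sum_{\gamma}\sum_L u_L^\theta\, \overline{F}(\gamma^{-1}P)\, \xi(\gamma L).
\]

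Next I would reindex the double sum. For each fixed $L$, its reduction $\bar L$ modulo $N$ is invertible in $\SL_2(\Z/N\Z)$, so $\gamma \mapsto \gamma \bar L$ is a bijection of $\overline{\SL_2(\Z/N\Z)}$. Setting $\gamma' = \gamma L$ one has $\gamma^{-1}P = \bar L\,(\gamma')^{-1}P$, and collecting the coefficient of each $\xi(\gamma')$ gives
\[
T_l(\sE_P) = \sum_{\gamma'}\Bigl(\sum_L u_L^\theta\, \overline{F}(\bar L\, Q)\Bigr)\xi(\gamma'), \qquad Q := (\gamma')^{-1}P .
\]
Here one should note that $\overline{F}$ is even, so each coefficient is insensitive to the sign ambiguity in $\overline{\SL_2(\Z/N\Z)}$ and every expression is well defined.

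Finally I would identify the inner sum $\sum_L u_L^\theta\, \overline{F}(\bar L Q)$ with $\sum_L u_L^\theta\, F(LQ)$ and invoke the previous proposition, applied with $Q$ in place of $P$, to evaluate it as $F(Q) + l\,F(lQ)$. Because $l \equiv 1 \pmod N$ we have $lQ = Q$ in $(\Z/N\Z)^2$, so the inner sum equals $(l+1)F(Q) = (l+1)\overline{F}((\gamma')^{-1}P)$. Substituting back yields $T_l(\sE_P) = (l+1)\sum_{\gamma'} \overline{F}((\gamma')^{-1}P)\,\xi(\gamma') = (l+1)\sE_P$, as claimed.

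Once the eigenvalue identity of the preceding proposition is in hand, this argument is essentially bookkeeping; the single point demanding genuine care is the threefold role of the hypothesis $l \equiv 1 \pmod N$, which legitimizes the Manin-symbol formula for $T_l$, makes $\gamma \mapsto \gamma\bar L$ a bijection of $\overline{\SL_2(\Z/N\Z)}$, and collapses $l\,F(lQ)$ into $l\,F(Q)$. I expect the main obstacle to be tracking the inverse and the left/right conventions correctly through the reindexing, so that $\bar L$ ends up acting on the argument $Q = (\gamma')^{-1}P$ of $\overline{F}$ on exactly the side required to apply the previous proposition.
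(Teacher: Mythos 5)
Your proposal is correct and follows essentially the same route as the paper: both expand $T_l(\sE_P)$ via the Manin-symbol formula $T_l(\xi(\gamma))=\sum_L u_L^{\theta}\xi(\gamma L)$, reindex by $\gamma'=\gamma L$ so that the coefficient of $\xi(\gamma')$ becomes $\sum_L u_L^{\theta}\overline{F}(L(\gamma')^{-1}P)$, and then evaluate this inner sum as $(l+1)\overline{F}((\gamma')^{-1}P)$ using the identity $\sum_L u_L^{\theta}F(LQ)=F(Q)+lF(lQ)$ together with $lQ=Q$ from $l\equiv 1 \pmod N$. Your added remarks on the evenness of $\overline{F}$ and on where the congruence hypothesis is used are implicit in the paper but are accurate refinements, not deviations.
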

\begin{proof}
We use the propositions in the previous section. In particular, we use the following relation:
\[
\sum_L u_L^{\theta} F(LP)=F(P)+l F(lP). 
\]
If $l \equiv 1  \pmod N$, the reductions of the matrices in the support of $\theta_l$ are of determinant 
$1$ modulo $N$.   We have
\[
\sum_L u_L^{\theta} F(LP)=F(P)+lF(P)=(1+l)F(P). 
\]
We deduce the equality, 
\[
T_l(\sE_P)=\sum_L u_L^{\theta} \sum_{\gamma \in \overline{\SL_2(\Z/N\Z)}} \overline{F}(\gamma^{-1}P)[\gamma L] 
\]
\[
=\sum_{\gamma \in \overline{\SL_2(\Z/N\Z)}}\sum_L u_L^{\theta} \overline{F}(L\gamma^{-1}P)[\gamma ]
=(l+1)\sum_{\overline{\SL_2(\Z/N\Z)}}\overline{F}(\gamma^{-1}P)[\gamma]
=(l+1)\sE_P
.
\]
In the second step, we use the change of variable $\gamma L=\gamma'$. 
\end{proof}
We now prove the second part of Theorem~\ref{Eisensteinclass}. 
\begin{prop}
\label{kernel}
The classes of $\sE_P$ lie in the kernel of the map $R$. 
\end{prop}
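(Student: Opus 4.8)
The plan is to leverage Proposition~\ref{Heckeq}, which exhibits $\sE_P$ as a Hecke eigenvector with the \emph{Eisenstein} eigenvalue $l+1$ for $T_l$, together with the fact that $R$ commutes with the Hecke operators. The point is that the eigenvalue $l+1$ simply cannot occur in the cuspidal homology $\HH_1(X(N),\R)$, so the cuspidal projection $R(\sE_P)$ of an $(l+1)$-eigenvector must vanish.

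First I would observe that $R$ is exactly the $\R$-linear projection of $\HH_1(X(N),\partial_N,\R)$ onto $\HH_1(X(N),\R)$ with kernel $E(N)$. Indeed, for $c\in\HH_1(X(N),\R)$ the defining relation $\int_{R(c)}\omega=\int_c\omega$ for all $\omega\in\HH^0(X(N),\Omega^1)$ together with nondegeneracy of the integration pairing on $\HH_1(X(N),\R)$ gives $R(c)=c$, while $\ker R=E(N)$ by definition. To see that $R$ commutes with $T_l$ it then suffices to check that $T_l$ preserves both the image $\HH_1(X(N),\R)$ (clear by functoriality) and the kernel $E(N)$. The latter holds because $T_l$ sends holomorphic differentials to holomorphic differentials: if $e\in E(N)$ then $\int_{T_l e}\omega=\int_e(\cdots\omega)=0$ for all $\omega$, the inner expression being again holomorphic. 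Hence $R\,T_l=T_l\,R$.

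Applying this to $c=\sE_P$ and invoking $T_l\sE_P=(l+1)\sE_P$ from Proposition~\ref{Heckeq}, we obtain
\[
T_l\bigl(R(\sE_P)\bigr)=(l+1)\,R(\sE_P),
\]
so $R(\sE_P)\in\HH_1(X(N),\R)$ is a $T_l$-eigenvector with eigenvalue $l+1$. It remains to show that $l+1$ is not an eigenvalue of $T_l$ on $\HH_1(X(N),\C)$. By Eichler--Shimura the eigenvalues of $T_l$ there are the Hecke eigenvalues $a_l$ of weight-two cusp forms for $\Gamma(N)$ (and their conjugates), and by the Weil/Ramanujan--Petersson bound each satisfies $|a_l|\le 2\sqrt{l}$. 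Since $l+1-2\sqrt{l}=(\sqrt{l}-1)^2>0$ for the prime $l\ge 2$, the value $l+1$ lies outside this range, so $T_l-(l+1)\,\mathrm{Id}$ is invertible on $\HH_1(X(N),\R)$. Fixing one odd prime $l\equiv 1\imod N$ (which exists by Dirichlet, and is automatically odd once $N>1$ since then $2\not\equiv 1\imod N$), the displayed equation forces $R(\sE_P)=0$, i.e. $\sE_P\in\ker R=E(N)$, as claimed.

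The only genuinely nontrivial ingredient is the separation of the Eisenstein eigenvalue from the cuspidal spectrum, which is where the Weil bound enters; equivalently, the Eichler--Shimura congruence relation identifies $T_l$ with $\mathrm{Frob}_l+l\,\mathrm{Frob}_l^{-1}$ on the reduction of the Jacobian, whose Frobenius eigenvalues have absolute value $\sqrt{l}\neq 1$, so $l+1$ cannot arise. I would cite this as a standard fact rather than reprove it; everything else is formal and already uses only the single prime $l$ appearing in Proposition~\ref{Heckeq}.
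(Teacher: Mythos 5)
Your proof is correct, and it runs on the same engine as the paper's: Proposition~\ref{Heckeq} exhibits $\sE_P$ as a $T_l$-eigenvector with the Eisenstein eigenvalue $l+1$, and one then exploits that $l+1$ cannot occur in the cuspidal spectrum. The two implementations are dual, however. The paper stays on the side of differential forms: it quotes (from Manin) that $T_l-(l+1)$ is \emph{surjective} on $\HH^0(X(N),\Omega^1)$, writes an arbitrary holomorphic form as $\tilde\omega=(T_l-(l+1))\omega$, and computes in two lines
\[
\int_{\sE_P}\tilde\omega=\int_{T_l\sE_P}\omega-(l+1)\int_{\sE_P}\omega=0,
\]
using only the adjunction $\int_{\sE_P}T_l\omega=\int_{T_l\sE_P}\omega$; since vanishing of all periods is literally the definition of $\ker R$, no further structure on $R$ is needed. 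You instead work on homology: you establish that $R$ is the projection onto $\HH_1(X(N),\R)$ with kernel $E(N)$, that it commutes with $T_l$, and that $T_l-(l+1)$ is \emph{injective} on $\HH_1(X(N),\R)$, the last point via Eichler--Shimura plus the Ramanujan--Petersson (Weil) bound. The two key inputs are equivalent (surjectivity on forms versus injectivity on the homology that pairs perfectly with them), and the Weil bound is what underlies Manin's surjectivity statement anyway; your version makes that spectral input explicit, and the projection formalism would transfer to any setting where Eisenstein and cuspidal spectra are disjoint. The cost is the extra (correct, but not free) verifications about $R$ --- note in particular that your argument for $T_l$ preserving $E(N)$, ``$\int_{T_l e}\omega=\int_e(\cdots\omega)$'', is exactly the same adjunction the paper uses, so nothing is saved there and the paper's route is the shorter of the two.
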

\begin{proof}
Recall, $T_l$ is the Hecke operator on  $\HH^0(X(N), \Omega^1)$ deduced from the correspondences $T_l$ of degree $l+1$. Since the operators $T_l -(1+l)$ is surjective 
on $\HH^0(X(N), \Omega^1)$ \cite{MR0318157}, any  $\tilde{\omega} \in \HH^0(X(N), \Omega^1)$
can be written in of the form $\tilde{\omega}=(T_l-(1+l)) \omega$.  We then have 
\[
\int_{\sE_P}\tilde{\omega}=\int_{\sE_P}(T_l-(1+l)) \omega=\int_{\sE_P}T_l \omega-(l+1)\int_{\sE_P} \omega
\]
\[
=\int_{T_l\sE_P}\omega-(l+1)\int_{\sE_P} \omega=(l+1)\int_{\sE_P}\omega-(l+1)\int_{\sE_P} \omega=0
\]
We deduce that $\sE_P$ lies in the kernel of $R$. We have proved Theorem~\ref{Eisensteinclass}. 
\end{proof}

\section{Finer Eisenstein eigenvectors in mixed homology groups}

This section is not stricly useful for the main results of the article. But we hope to enlighten the reader about our constructions.

Let $M$ be an even integer. 
Consider the morphism of Riemann surfaces $\pi$ : $X(M)\rightarrow X(2)$. 
Recall that the modular curve $X(2)$ has three cusps: the classes $\Gamma(2)0$, $\Gamma(2)1$ and $\Gamma(2)\infty$ of $0$, $1$ and $\infty$ $\in{\bf P}^{1}({\bf Q})$. 
Consider the following partition of the cusps of $X(M)$ into $P_{+}$ and $P_{-}$, where $P_{+}=\pi^{-1}(\{\Gamma(2)0,\Gamma(2)\infty\})$ and $P_{-}=\pi^{-1}(\{\Gamma(2)1\})$. 

Thus we can consider the mixed homology groups 
$\HH_{0}=\HH_{1}(X(M)-P_{+},P_{-};\Z)$ and $\HH^{0}=\HH_{1}(X(M)-P_{-},P_{+};\Z)$. The intersection pairing $\bullet$ provides a $\Z$-valued perfect pairing between the latter two groups. 

More precisely, consider the map $\xi^{0}$ : $\Z[\pm\Gamma(M)\backslash \Gamma(2)]\rightarrow \HH_{0}$ that associates to $\gamma\in \Gamma(2)$ the class in $\HH^{0}$ of the image in $X(M)$ of the geodesic path in $\tH$ from $\gamma0$ to $\gamma\infty$. 
It has a counterpart 
$\xi_{0}$ : $\Z[\pm\Gamma(M)\backslash \Gamma(2)]\rightarrow \HH^{0}$ that associates to $\gamma\in \Gamma(2)$ the class in $\HH_{0}$ of the image in $X(M)$ of the geodesic path in $\tH$ from $\gamma(-1)$ to $\gamma1$. 

In \cite{MR1405312}, one of us proved that both  $\xi^{0}$ and $\xi_{0}$ are group isomorphisms. Moreover one has $\xi^{0}([x])\bullet\xi_{0}([y])=0$ if $x\ne y$ and $\xi^{0}([x])\bullet\xi_{0}([y])=1$ if $x=y$ ($x$, $y\in \pm\Gamma(M)\backslash \Gamma(2)$).

For $m$ odd integer, the Hecke correspondence $T_{m}$ leaves stable the sets of cusps $P_{+}$ and $P_{-}$. Hence it defines endomorphisms of the groups $\HH_{0}$ and $\HH^{0}$. 

More precisely, when $m$ is congruent to $1$ modulo $M$, the Hecke action if given thus on the bases of $\HH_{0}$ and $\HH^{0}$ \cite{MR1316830}:
\[
T_{m}\xi^{0}(\gamma)=\xi^{0}(\gamma\theta_{m})
\]
and 
\[
T_{m}\xi_{0}(\gamma)=\xi_{0}(\gamma\widetilde{\theta_{m}}).
\]
Similar formulas hold even when $m$ is any odd, positive integer \cite{MR1316830}.

Suppose now that $M=2N$. One has canonical identifications $\pm\Gamma(M)\backslash\Gamma(2)\simeq\pm\Gamma(N)\backslash\SL_2(\Z) \simeq\overline{\SL_2(\Z/N\Z)}$.

On the other hand, we have a canonical map $\lambda$ obtained by composing the maps: 
\[
\HH^{0}=\HH_{1}(X(M)-P_{-},P_{+};\Z)\rightarrow\HH_{1}(X(M),P_{+};\Z)\rightarrow\HH_{1}(X(M),\partial_{M};\Z)\rightarrow \HH_{1}(X(N),\partial_{N};\Z).
\] 
The last map is deduced from the obvious degeneracy map $X(M)\rightarrow X(N)$. 
The map $\lambda$ has the following two properties:
For $\gamma\in \Gamma(2)$, one has $\lambda(\xi^{0}(\gamma))=\xi(\gamma)$. Therefore $\lambda$ is surjective.

It respects the Hecke operators and therefore the image by $\lambda$ of an Eisenstein class is an Eisenstein class.

Our main innovation in the current paper, is to lift the Eisenstein classes of $\HH_{1}(X(N),\partial_{N};\Z)$ to certain Eisenstein classes of $\HH^{0}=\HH_{1}(X(M)-P_{-},P_{+};\Z)$, which admits a canonical basis (via $\xi^{0}$), and therefore the latter classes admit a canonical expression in terms of Manin symbols, which via $\lambda$ gives the distinguished expressions for Eisenstein classes obtained in this article. Evidently, our method presents difficulties to find Eisenstein classes of even level. Nevertheless, some non trivial things can be said in that context.

Hence the classes 
\[
\sE^{0}_P=\sum_{{\gamma} \in \overline{\SL_2(\Z/N\Z)}} {\bar F}(\gamma^{-1} P)\xi^{0}(\gamma)    = \sE_{-P}
\]
are Eisenstein classes of $\HH^{0}=\HH_{1}(X(M)-P_{-},P_{+};\Z)$, which can be proved in the same way than 
Theorem~\ref{Eisensteinclass}.

Via the canonical map $\HH^{0}=\HH_{1}(X(M)-P_{-},P_{+};\Z)\rightarrow \HH_{1}(X(M),\partial_{M};\Z)$ one obtains Eisenstein classes in $ \HH_{1}(X(M),\partial_{M};\Z)$. Denote by $\sE'_{P}$ the image of $\sE^{0}_P$. Thus we are already beyond the scope of Theorem~\ref{Eisensteinclass}. However, the Hecke operator $U_{2}$ acts on $ \HH_{1}(X(M),\partial_{M};\Z)$.

It's unclear to us whether $E(M)$ is spanned, as a $\Z[U_{2}]$-module by the classes $\sE'_{P}$.

\section{Boundaries of Eisenstein classes}

Let  $C_N$ be the set of points of $(\Z/N \Z)^2$ of order $N$ modulo multiplication
by $\{\pm 1\}$. 
Denote by $\phi$ the map that associate the representative
$[\frac{u}{v}] \in  \sP^1(\Q)$  of  $\Gamma(N )\backslash \sP^1(\Q)$
the class of $(u,v)$ in $C_N$. 
We identify the sets $\partial_N$ with $C_N$ using the map $\phi:\Gamma(N) \backslash\sP^1(\Q) \rightarrow C_N$ \cite{MR2112196}. Let  $(P)$ be an image in $C_N$  of a cusp in the above bijection. 
For an element  $P$ of order $N$ in $(\Z/N\Z)^2$, 
 denote by $(P)$ the cusp corresponding to the element $\overline{P}$  formed by the above bijection.
We calculate the boundary $\delta(\sE_P) \in \Z[C_N]$ of the Eisenstein class $\sE_P$.

\begin{thm}
\label{boundary} One has
\[
\delta(\sE_P)=2N[\sum_{\mu \in (\Z/N\Z)^*} (F(\mu P_{\infty}) +\frac{1}{4})(\mu P)]-\frac{N}{2} [\sum_{Q \in C_{N}} (Q)].
\]
\end{thm}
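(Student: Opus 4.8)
The plan is to compute the boundary cusp by cusp. First I would expand $\delta(\sE_P)$ using the identification $\partial_N\simeq C_N$ of the excerpt together with the formula $\delta(\xi(\gamma))=(\gamma P_{0})-(\gamma P_{\infty})$; this comes from $\gamma\cdot 0=b/d\leftrightarrow \gamma P_{0}$ and $\gamma\cdot\infty=a/c\leftrightarrow\gamma P_{\infty}$ for $\gamma=\left(\begin{smallmatrix}a&b\\c&d\end{smallmatrix}\right)$ under $\phi$. Thus
\[
\delta(\sE_P)=\sum_{\gamma\in\overline{\SL_2(\Z/N\Z)}}\overline{F}(\gamma^{-1}P)\big((\gamma P_{0})-(\gamma P_{\infty})\big),
\]
and regrouping by the target cusp $Q\in C_N$, the coefficient of $(Q)$ is $\sum_{\gamma P_{0}=Q}\overline{F}(\gamma^{-1}P)-\sum_{\gamma P_{\infty}=Q}\overline{F}(\gamma^{-1}P)$.

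Next I would exploit the relation $S P_{\infty}=P_{0}$ together with $F(SR)=-F(R)$ (the relation $F(P)+F(SP)=0$). The bijection $\gamma\mapsto\gamma S$ of $\overline{\SL_2(\Z/N\Z)}$ carries $\{\gamma:\gamma P_{0}=Q\}$ onto $\{\gamma:\gamma P_{\infty}=Q\}$, since $(\gamma S)P_{\infty}=\gamma P_{0}$, and replaces $\overline{F}((\gamma S)^{-1}P)=\overline{F}(S^{-1}\gamma^{-1}P)$ by $-\overline{F}(\gamma^{-1}P)$. Hence the two fibered sums add and the coefficient of $(Q)$ equals $2\sum_{\gamma P_{0}=Q}\overline{F}(\gamma^{-1}P)$. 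Fixing one $\gamma_0$ with $\gamma_0 P_{0}=Q$ and writing $R=\gamma_0^{-1}P={}^{t}(r_1,r_2)$, the fiber is $\gamma_0\{\left(\begin{smallmatrix}1&0\\c&1\end{smallmatrix}\right)\}_{c\in\Z/N\Z}$ (the unipotent stabilizer of $P_{0}$ in $\overline{\SL_2(\Z/N\Z)}$), so the coefficient of $(Q)$ becomes
\[
2\sum_{c\in\Z/N\Z}F(r_1,\,r_2-cr_1).
\]

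Third, I would evaluate this one–parameter sum. The cusp $Q=\gamma_0 P_{0}$ is of the form $\mu P$ with $\mu\in(\Z/N\Z)^*$ precisely when $R=\mu^{-1}P_{0}$, that is when $r_1\equiv 0$; in that case $r_2$ is a unit equal to $\mu^{-1}$, every summand equals $F(0,r_2)$, and the sum collapses to $N\,F(0,r_2)$. For $r_1\not\equiv 0$ the argument $r_2-cr_1$ runs, with multiplicity $\gcd(r_1,N)$, over a full coset of the cyclic subgroup generated by $r_1$, and a finite cotangent–summation identity—applied either to the explicit form $F(a,b)=-\tfrac14\cot\tfrac{\pi(a+b)}{2N}\cot\tfrac{\pi(a-b)}{2N}$ or to the Fourier expansion of $\overline{B_1}$ exploited in Proposition~\ref{Alternative}—should show that the sum takes a value independent of $Q$, producing after doubling the uniform contribution $-\tfrac{N}{2}\sum_{Q\in C_N}(Q)$. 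This explicit trigonometric evaluation, and the proof that the generic value is genuinely the same constant for every $r_1\not\equiv 0$ regardless of $\gcd(r_1,N)$, is where I expect the main difficulty to lie.

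Finally I would reassemble: the cusp-independent part yields $-\tfrac{N}{2}\sum_{Q}(Q)$, while the cusps $\mu P$ carry the excess $2\big(N\,F(0,r_2)-(\text{generic value})\big)$, which I would match with $2N\big(F(\mu P_{\infty})+\tfrac14\big)$ by rewriting $F(P)+\tfrac14$ in the $\tfrac{1}{2(1-\cos(\cdot))}$ form and performing the change of variable $\mu\mapsto\mu^{-1}$ that relates the coordinate $r_2=\mu^{-1}$ of $R$ to the cusp label $\mu P$; this matching is the second delicate point. A useful global check is that the total coefficient must vanish, since $\operatorname{im}\delta$ lies in the augmentation kernel of $\Z[\partial_N]$ (the exact sequence of Section~\ref{MSretract}); this forces $\sum_{\mu}\big(F(\mu P_{\infty})+\tfrac14\big)=\tfrac14\lvert C_N\rvert$ and confirms that the two terms of the asserted formula balance.
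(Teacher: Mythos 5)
Your opening reductions do match the paper's own proof: the paper likewise computes $\delta(\sE_P)$ coefficient by coefficient, uses $F(SP)=-F(P)$ to fold the two fibered sums into twice a single one, and parametrizes the fiber over a fixed cusp by a unipotent orbit, so that everything rests on a one-parameter sum of values of $F$. The genuine gap is that you stop exactly where the theorem begins: the claim that for $r_1\not\equiv 0$ the sum $\sum_{c\in\Z/N\Z}F(r_1,r_2-cr_1)$ equals a constant ($-N/4$, hence $-N/2$ after doubling) independent of $r_1$ and $r_2$ \emph{is} the content of the term $-\frac{N}{2}\sum_{Q}(Q)$, and you defer it to ``a finite cotangent--summation identity \dots should show''. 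That evaluation is what occupies almost all of the paper's proof: $F$ is re-expanded via Proposition~\ref{Alternative}, the sum over the unipotent parameter $\alpha$ kills all Fourier modes with $(s_1-s_2)v\not\equiv 0 \pmod{N}$, and the surviving terms are evaluated by Lemma~\ref{twistedB} plus a final root-of-unity computation. Your sketch also underestimates this step in a specific way: $F$ is defined through parity-constrained lifts (representatives with $x-y$ odd, i.e.\ the lift $(\cdot)^0$ to $(\Z/2N\Z)^2$), so as $c$ varies the summand is \emph{not} a fixed cotangent expression evaluated along an arithmetic progression modulo $N$; there is a mod $2$ twist correlated with $c$. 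This is why the paper works modulo $2N$, why the factor $e(\frac{s_1-s_2}{2})$ appears, and why the root of unity $\beta$ in Lemma~\ref{twistedB} has order $2d$ rather than $d$ (the paper's $d=\gcd(v,N)$ being the analogue of your $\gcd(r_1,N)$). Without confronting this twist, the ``coset of $\langle r_1\rangle$ with multiplicity $\gcd(r_1,N)$'' picture does not produce the claimed constant, and the uniformity in $\gcd(r_1,N)$ --- which you yourself flag as the main difficulty --- remains unproved.

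Two further problems. First, a sign: you take $\delta(\xi(\gamma))=(\gamma P_0)-(\gamma P_\infty)$ (which is what the paper's displayed definition of $\delta$ literally gives), but the stated formula is derived under the opposite convention $\delta([\gamma])=(\gamma P_\infty)-(\gamma P_0)$ used in the paper's proof; carried through, your convention yields the negative of the asserted right-hand side. Concretely, your coefficient at the cusp $(\mu P)$ comes out as $2NF(0,\mu^{-1})=-2NF(\mu^{-1}P_\infty)$, which cannot equal the positive quantity $2N(F(\mu P_\infty)+\frac14)-\frac{N}{2}=2NF(\mu P_\infty)$, so the orientation must be fixed before your final matching step can succeed. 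Second, your closing ``global check'' is asserted rather than performed, and it fails as you state it: since $F(\mu P_\infty)+\frac14=\frac{1}{2(1-\cos(\pi\mu^{0}/N))}$, Proposition~\ref{special} (with $\chi$ trivial) gives $\sum_{\mu\in(\Z/N\Z)^*}(F(\mu P_\infty)+\frac14)=\frac{2N^{2}}{\pi^{2}}L(\chi_{2},2)$, which for $N$ prime equals $\frac{N^{2}-1}{4}=\frac{|C_N|}{2}$, not $\frac{|C_N|}{4}$. The discrepancy reflects that $\mu$ and $-\mu$ label the same cusp $(\mu P)$, so the check, done honestly, exposes a factor-of-two normalization subtlety in how the sum over $(\Z/N\Z)^*$ must be read rather than confirming the formula; it needs to be resolved, not cited as confirmation.
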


\begin{proof}
Since $S(P_{\infty})=-P_0$, we deduce that the boundary of $[\gamma]$ is equal to 
 \[
 \delta([\gamma])=(\gamma \infty)-(\gamma 0)=(\gamma P_{\infty})-(\gamma P_0)
 \]
on $\Z[C_N]$. We deduce that 
\[
\delta(\sE_P)=\sum_{\gamma \in \overline{\SL_2(\Z\slash N\Z)}} \overline{F}(\gamma^{-1}P)((\gamma P_{\infty})-(\gamma P_0))
=\sum_{\gamma \in  \overline{\SL_2(\Z\slash N\Z)}} (\overline{F}(\gamma^{-1}P)-\overline{F}(\sigma \gamma^{-1}P))(\gamma P_{\infty}).
\]
We note that $F(\sigma P)=-F(P)$, hence the above sum reduces to 
\[
\delta(\sE_P)=2\sum_{\gamma \in \overline{\SL_2(\Z\slash N\Z)}} \overline{F}(\gamma^{-1}P)(\gamma P_{\infty}).
\]
We calculate the coefficient of $(Q)$ in the above expression using the alternative 
description of $F(P)$ of  Prop. \ref{Alternative}. The  coefficient of $(Q)$ is:
\[
2 \sum_{\gamma P_{\infty}=Q} \sum_{s=(s_1,s_2) \in (\Z/N\Z)^2} e(\frac{s(H \gamma^{-1}P)^0}{2N}) \overline{B_1}(\frac{s_1}{2N})\overline{B_1}(\frac{s_2}{2N}). 
\]
Suppose $\gamma_0 \in \overline{\SL_2(\Z/N\Z)}$ be such that $\gamma_0 P_{\infty} = Q$ (such an element $\gamma_0$ always exist). Consider the matrix 
$T_{\alpha} =\left(\begin{smallmatrix}
1 & \alpha\\
0 & 1\\
\end{smallmatrix}\right)$ with $\alpha \in \Z/N\Z$. We have 
\[
\{\gamma|\gamma \in \SL_2(\Z/N\Z), \gamma P_{\infty}=Q\}=\{\gamma_0 T_{\alpha}| \alpha \in (\Z/N\Z)\}. 
\]
Hence, the coefficient of $(Q)$ is equal to 
\[
2 \sum_{s=(s_1,s_2) \in (\Z/2N\Z)^2} \sum_{\alpha \in (\Z/N\Z)} e(\frac{s(HT_{\alpha} \gamma_0^{-1}P)^0}{2N}) \overline{B_1}(\frac{s_1}{2N})\overline{B_1}(\frac{s_2}{2N}):=T(P, Q).
\]
 We will now prove that:

\begin{equation*}
T(P, Q) =
\begin{cases}
2NF(\mu P_{\infty}) & \text{if $Q=\mu P$,} \\
- \frac{N}{2}& \text{if $Q \neq \mu P$ for all $\mu\in (\Z/N\Z)$}. \\
\end{cases}
\end{equation*}
We examine the quantity 
\[
 \sum_{\alpha \in (\Z/N\Z)} e(\frac{s(HT_{\alpha} \gamma_0^{-1}P)^0}{2N}). 
\]

Let us first assume that $P= \mu Q$  with 
$\mu \in (\Z/N\Z)^*$. We then have $T_{\alpha} \gamma_0^{-1}P = \mu P_{\infty}$ and
\[
T(P,Q)=2 N \sum_{s=(s_1,s_2) \in (\Z/2 N\Z)^2} e(\frac{s(H \mu P_{\infty})^0}{2N}) \overline{B_1}(\frac{s_1}{2N})\overline{B_1}(\frac{s_2}{2N})=2NF(\mu P_{\infty}). 
\]

Assume now  $P  \neq \mu Q$ for all $\mu \in (\Z/N\Z)^*$.  For $\gamma_0^{-1} P
=\begin{pmatrix}u\\v\end{pmatrix}$, we have $v \neq 0 \pmod N$ since $Q\neq \mu P$. 
 
Writing $s=(s_1,s_2)$, it is easy to see  that:
\begin{eqnarray*}
L&=& s(HT_{\alpha} \gamma_0^{-1}P)^0\\
&=&(s_1,s_2) \begin{pmatrix}(u+(\alpha+1)v)^0\\ (-u-(\alpha-1)v)^0\end{pmatrix}\\
&=&s_1 (u +(\alpha + 1)v)^0 + s_2 (-u -(\alpha-1)v)^0.
\end{eqnarray*}
From the above expression, we have:
$L \equiv (s_1 - s_2)u + \alpha(s_1 -s_2)v +(s_2 + s_1)v \pmod N$
 and $L \equiv s_1-s_2 \pmod 2$. Hence, we conclude that:
\[
e(\frac{L}{2N})=e(\frac{L}{2})e(\frac{tL}{N})=e(\frac{s_1-s_2}{2})e(\frac{t[(s_1 - s_2)u + \alpha(s_1 -s_2)v +(s_2 + s_1)v ]}{N})
\]
with $t=\frac{1-N}{2}$. If $(s_1 -s_2)v \not \equiv 0$ modulo $N$, we deduce that 
\[
\sum_{\alpha \in \Z/N\Z} e(\frac{s(HT_{\alpha} \gamma_0^{-1}P)^0}{2N})=0.
\]

We now calculate $T(P,Q)$ by restricting the sum to the terms that satisfy the equality $(s_1 - s_2)v \equiv 0 \pmod {N}$.  
Let us assume $s_1 v \equiv s_2 v \pmod {N}$, then:  
\[
 L \equiv (s_1 - s_2)u  +(s_2 + s_1)v \equiv s_1(u+v)  -s_2(u-v)\pmod {N}.
\]
Let $d={\rm gcd}(v,N)$ and $d'=N/d$. By assumption, one has $d<N$ and consequently $d'>1$.
Observe that $s_1 v \equiv s_2 v \pmod {N}$ is equivalent to $s_1 \equiv s_2 \pmod {d'}$ and that $u$ is well defined modulo $d$.

One has
\begin{eqnarray*}
T(P,Q) & = &    2 N \sum_{s_1 v \equiv s_2v  \pmod {N}}  e(\frac{t[s_1(u+v)-s_2(u-v)]}{N}) \overline{B_1}(\frac{s_1}{2N})\overline{B_1}(\frac{s_2}{2N})e(\frac{s_1-s_2}{2})\\
& = & 
 2 N \sum_{s_1 \in (\Z/2N\Z)} e(\frac{ts_1(u+v)}{N}) \overline{B_1}(\frac{s_1}{2N}) 
  \sum_{s_2  \equiv s_1   \pmod {d'}}  e(-\frac{t s_2(u-v)}{N}))\overline{B_1} (\frac{s_2}{2N})e(\frac{s_1-s_2}{2}).\\
   \end{eqnarray*}
   
Set $s_{2}=s_{1}+kd'$, $\alpha=e(2tv/N)$ (a primitive $d'$-th root of unity),  $\zeta=-e(-tu/d)$
and $\beta=-\zeta$ (a primitive $2d$-th root of unity), where $k$ is an integer modulo $2d$.  The calculation becomes ($s_{1}$ runs through the integers modulo $2N$ and $k$ runs through the integers modulo $2d$)
 \begin{eqnarray*}  
T(P,Q)  &=&  2 N \sum_{s_1} e(\frac{ts_1 
  (u+v)}{N}) \overline{B_1}(\frac{s_1}{2N})
  \sum_{k}  e(-\frac{t(u-v)[s_1+kd']}{N}) \overline{B_1} (\frac{s_1+k d'}{2N})e(-\frac{kd'}{2})\\
    &=&  2 N \sum_{s_1=0}^{2N-1} \alpha^{s_{1}} \overline{B_1}(\frac{s_1}{2N}) 
     \sum_{k=0}^{2d-1}\beta^{k}\overline{B_1} (\frac{s_1}{2N}+\frac{k}{2d}).\\
  \end{eqnarray*}
  
  Set $s_{1}=ad'+b$, with $b$ the class modulo $2N$ of an integer in $[0, d')$ and $a$ well defined modulo $2d$. Hence we get ($a$ and $k$ run through the integers modulo $2d$ and $b$ runs through the integers modulo $d'$)
   
   \begin{eqnarray*}  
T(P,Q)  &=&  2 N \sum_{a=0}^{2d-1} \sum_{b=0}^{d'-1}\alpha^{ad'+b} \overline{B_1}(\frac{b}{2N}+\frac{a}{2d})  \sum_{k}\beta^{k}\overline{B_1} (\frac{b}{2N}+\frac{a}{2d}+\frac{k}{2d})\\
     &=& 2 N \sum_{a=0}^{2d-1} \sum_{b=0}^{d'-1}\alpha^{b} \overline{B_1}(\frac{b}{2N}+\frac{a}{2d}) 
     \sum_{k=0}^{2d-1}\beta^{k}\overline{B_1} (\frac{b}{2N}+\frac{a}{2d}+\frac{k}{2d}).\\
     \end{eqnarray*}
     We need a lemma.
 \begin{lemma}
\label{twistedB}
Let $f(x)=\sum_{k}\beta^{k}\overline{B_1} (x+\frac{k}{2d})$, for $x\in {\R}$. Then
for $l$ integer, $0\le l<2d$ and $x\in(l/2d,(l+1)/2d)$, we have 
 \begin{eqnarray*}  
   f(x)= \sum_{k}\beta^{k}\overline{B_1} (x+\frac{k}{2d})=\frac{\beta^{-l}}{(\beta-1)},
     \end{eqnarray*}
   and      
   \begin{eqnarray*}  
f( \frac{l}{2d})=   \sum_{k}\beta^{k}\overline{B_1} (\frac{l}{2d}+\frac{k}{2d})=\frac{\beta^{-l}(1+\beta)}{2(\beta-1)}.
     \end{eqnarray*}
     \end{lemma}
     \begin{proof}
     Indeed, as a linear combination of functions which are $1$-periodic, affine of slope $1$ on the interval $(l/2d,(l+1)/2d)$, $f$ is $1$-periodic and affine of slope $\sum_{k=0}^{2d-1}\beta^{k}=0$ on $(l/2d,(l+1)/2d)$. Hence $F$ is constant on $(l/2d,(l+1)/2d)$. The relation $f(x+1/2d)=\beta^{-1}f(x)$ follows from the $1$-periodicity of $\overline{B_1}$. The first formula of the lemma follows from the particular case $x\in (0,1/2d)$, where $f(x)=\sum_{k}\beta^{k}k/2d=1/(\beta-1)$. The second formula is obtained by remarking that the values of $f$ at a point of discontinuity $y$ are obtained by averaging its values in the right and left neighborhoods of $y$ (this is a property that $f$ inherits from $\overline{B_1}$ by linearity). 
     \end{proof}
     We can resume the calculation of $T(P,Q)$ using Lemma \ref{twistedB} and the relation $\sum_{b}\alpha^{b}=0$, since $\alpha$ is a primitive $d'$-th root of unity and $d'>1$. We obtain thus (here again $a$ runs through the integers modulo $2d$ and $b$ runs through the integers modulo $d'$):
 \begin{eqnarray*}  
T(P,Q)  
&=&  2 N \sum_{a} \sum_{b}\alpha^{b} \overline{B_1}(\frac{b}{2N}+\frac{a}{2d}) \frac{\beta^{-a}}{\beta-1}+2 N \sum_{a}\overline{B_1} (\frac{a}{2d})\frac{\beta^{-a}(1+\beta)}{2(\beta-1)}\\
 &=& 
  2 N   \sum_{b=1}^{d'-1}\frac{\alpha^{b}}{(\beta-1)(\beta^{-1}-1)}+2 N \frac{(1+\beta)}{2(\beta-1)} \frac{1+\beta^{-1}}{2(\beta^{-1}-1)}\\
   &=& 
  -\frac{2N}{(\beta-1)(\beta^{-1}-1)}+2 N \frac{(1+\beta)}{2(\beta-1)} \frac{1+\beta^{-1}}{2(\beta^{-1}-1)}\\
&=& 
- \frac{2N}{(\beta-1)(\beta^{-1}-1)}[1-  \frac{(1+\beta)(1+\beta^{-1})}{4}]\\
 &=&-\frac{N}{2}.
  \end{eqnarray*}   
\end{proof}    

 \section{The retraction formula}
We now prove Theorem~\ref{Retraction}. Consider the following modular symbol:
\[
\sE(\gamma)=\frac{1}{2N \phi(N)}\sum_{\alpha \in (\Z/N\Z)^*} \sum_{\chi \in D_N^{+}} 
 \frac{{\chi}(\alpha)}{L(\chi)}(\sE_{\alpha \gamma P_{\infty}}-\sE_{\alpha \gamma P_{0}})
\]

(We will see a bit later that $L(\chi)\ne0$.)

The Theorem~\ref{Retraction} follows from the proposition.
\begin{prop}
\label{done}
We have:
\[
\delta(\xi(\gamma))=\delta(\sE(\gamma)).
\]
\end{prop}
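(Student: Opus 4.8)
The plan is to evaluate $\delta(\sE(\gamma))$ directly and compare it with $\delta(\xi(\gamma))$. First I would record that, by the definition of the boundary map, $\delta(\xi(\gamma)) = (\gamma P_\infty) - (\gamma P_0)$ in $\Z[C_N]=\Z[\partial_N]$, so the task is to show that $\delta(\sE(\gamma))$ carries coefficient $+1$ on the cusp $(\gamma P_\infty)$, coefficient $-1$ on $(\gamma P_0)$, and $0$ on every other cusp. Since $\delta$ is linear, I would substitute the formula of Theorem~\ref{boundary} for each $\delta(\sE_{\alpha\gamma P_\infty})$ and each $\delta(\sE_{\alpha\gamma P_0})$. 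The uniform term $-\tfrac{N}{2}\sum_{Q\in C_N}(Q)$ is common to both and hence cancels in each difference $\delta(\sE_{\alpha\gamma P_\infty})-\delta(\sE_{\alpha\gamma P_0})$; what survives is supported on the two scalar orbits of $\gamma P_\infty$ and $\gamma P_0$ under the action of $(\Z/N\Z)^{*}$ on $C_N$, with coefficients governed by $F(\mu P_\infty)$.

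Next I would exploit that the coefficient attached to $(\mu P)$ in $\delta(\sE_P)$ depends on $\mu$ only through $F(\mu P_\infty)$, together with the clean identity (obtained from the computation behind Proposition~\ref{Alternative})
\[
F(\mu P_\infty) + \tfrac14 = \frac{1}{2\left(1-\cos(\pi\mu^0/N)\right)},
\]
which is exactly the summand defining $L$. Writing each orbit as a torsor under $G:=(\Z/N\Z)^{*}/\{\pm1\}$ and collecting the terms of $\delta(\sE(\gamma))$ orbit-wise, the operator $P\mapsto\delta(\sE_P)$ becomes a convolution on $G$, diagonalized by the even Dirichlet characters $\chi\in D_N^{+}$ (the characters of $G$). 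I would compute the $\chi$-eigenvalue: after the change of variable $\beta=\mu\alpha$ in $\sum_{\alpha}\sum_{\mu}\chi(\alpha)\big(F(\mu P_\infty)+\tfrac14\big)(\mu\alpha\gamma P_\infty)$, the $\mu$-sum factors out as $\sum_{\mu}\overline{\chi(\mu)}\big(F(\mu P_\infty)+\tfrac14\big)$, which by the displayed identity is a nonzero multiple of $L(\chi)$ (up to the substitution $\chi\mapsto\overline\chi$, harmless since $D_N^{+}$ is stable under conjugation).

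The role of the normalization $\tfrac{1}{2N\phi(N)L(\chi)}$ is now apparent: the factor $1/L(\chi)$ is designed to cancel precisely this eigenvalue. After cancellation the expression reduces to sums of the shape $\tfrac{1}{\phi(N)}\sum_{\chi\in D_N^{+}}\chi(\beta)\big((\beta\gamma P_\infty)-(\beta\gamma P_0)\big)$, and I would finish by invoking orthogonality of the characters of $G=(\Z/N\Z)^{*}/\{\pm1\}$: since $\sum_{\chi\in D_N^{+}}\chi(\beta)=|D_N^{+}|=\phi(N)/2$ when $\beta\equiv\pm1$ and $0$ otherwise, the character sum isolates exactly the base cusps, producing coefficient $+1$ on $(\gamma P_\infty)$, $-1$ on $(\gamma P_0)$, and $0$ elsewhere. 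This gives $\delta(\sE(\gamma))=(\gamma P_\infty)-(\gamma P_0)=\delta(\xi(\gamma))$.

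I expect the main obstacle to be the exact bookkeeping in the eigenvalue step. One must track the factors of $2$ coming from passing between $(\Z/N\Z)^{*}$ and its quotient $G$ by $\pm1$, from the symmetry $F(P)=F(-P)$ (each orbit-cusp being reached by both $\pm\mu$), and from the disjointness of the two orbits of $\gamma P_\infty$ and $\gamma P_0$, and verify they conspire with the prefactor $\tfrac{1}{2N\phi(N)}$ and with Theorem~\ref{boundary} to yield coefficients exactly $\pm1$. A delicate point here is checking that the convolution eigenvalue is $L(\chi)$ itself rather than its complex conjugate $L(\overline\chi)$, which hinges on the precise argument ($\mu$ versus $\mu^{-1}$) entering the boundary coefficient. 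Finally, the whole inversion is legitimate only because $L(\chi)\neq 0$; this is exactly the non-vanishing recalled before Theorem~\ref{Retraction}, where $L(\chi)$ is identified with the algebraic part of a nonzero Dirichlet $L$-value.
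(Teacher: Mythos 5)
Your proposal is correct and follows essentially the same route as the paper's own proof: substitute the boundary formula of Theorem~\ref{boundary}, diagonalize the resulting convolution action of $(\Z/N\Z)^*$ on $\C[C_N]$ via the even characters (the paper phrases this with the idempotents $e_\chi$), cancel the eigenvalue against the $1/L(\chi)$ normalization, and conclude by orthogonality, i.e.\ $\sum_{\chi\in D_N^+}e_\chi=\frac{1}{2}([1]+[-1])$, with the uniform term $-\frac{N}{2}\sum_{Q\in C_N}(Q)$ dropping out of the difference. The delicate point you flag --- whether the change of variable produces $L(\chi)$ or $L(\overline{\chi})$ as the eigenvalue --- is genuine and is exactly the step the paper's proof passes over silently when it writes $2N\phi(N)L(\chi)e_{\chi}(\gamma P_{\infty})$.
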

\begin{proof}
It is easy to see that:
\[
\delta(\xi(\gamma))=(\gamma P_{\infty})-(\gamma P_0). 
\]
 It is enough to prove that 
\[
(\gamma P_{\infty})=\frac{1}{2N \phi(N)}\sum_{\alpha \in (\Z/N\Z)^*} \sum_{\chi \in D_N^{+}} 
\frac{{\chi}(\alpha)}{L(\chi)} \delta(\sE_{\alpha \gamma P_{\infty}}) + R,
\]
where $R\in\C[C_{N}]$ is a quantity independent of $\gamma$.

We use theorem~\ref{boundary} and proceed using a Fourier inversion formula in $\C [C_{N}]$.
For $\chi\in D_{N}^{+}$ (the group of even Dirichlet characters modulo $N$), denote by $e_{\chi}=(1/\phi(N))\sum_{\beta\in \Z/N\Z)^*}\chi(\beta)[\beta]$ the corresponding idempotent of the group ring $\C[\Z/N\Z)^*]$. Such a group ring acts on $\C[C_{N}]$.
Recall that we have introduced 
\[
L(\chi)=\sum_{\beta}\chi(\beta)(F(\beta P_{\infty})+1/4)=\frac{1}{2}\sum_{\mu\in(\Z/N\Z)^{*}}\frac{\chi(\mu)}{1-\cos(\frac{\pi\mu^{0}}{N})}.
\]

Set $U=\sum_{Q \in C_{N}} (Q)$. We have, using Theorem~\ref{boundary}:

\begin{eqnarray*}  
\sum_{\beta\in (\Z/N\Z)^*}\chi(\beta)\delta(\sE_{\beta \gamma P_{\infty}}) 
&=&  2 N \sum_{\beta\in (\Z/N\Z)^*}\sum_{\mu\in  (\Z/N\Z)^*} \chi(\beta)(F(\mu P_{\infty})+\frac{1}{4})(\mu\beta \gamma P_{\infty})-\frac{N}{2}\sum_{\beta\in(\Z/N\Z)^*}U   \\
 &=& 2N\phi(N)L(\chi)e_{\chi}(\gamma P_{\infty})-\frac{N}{2}\delta_{\chi,1}\phi(N)U.
  \end{eqnarray*}   

Hence we get
\[
e_{\chi}(P)=\frac{1}{2N\phi(N)}\sum_{\beta\in (\Z/N\Z)^*}\frac{\chi(\beta)}{L(\chi)}
\delta(\sE_{\beta \gamma P_{\infty}})+\frac{\delta_{\chi,1}}{4L(\chi)}U.
\]
As $\sum_{\chi\in D_{N}^{+}}e_{\chi}=\frac{1}{2}([1]+[-1])$, we can now sum over $\chi\in D_{N}^{+}$ to obtain
\[
(\gamma P_{\infty})=\frac{1}{2N\phi(N)}\sum_{\beta \in (\Z/N\Z)^*} \sum_{\chi \in D_N^{+}} 
\frac{{\chi}(\alpha)}{L(\chi)} \delta(\sE_{\alpha \gamma P_{\infty}}) +\frac{1}{4L(1)}U.
\]
The term $\frac{1}{4L(1)}U$ is indeed independent of $\gamma$. In other words, we get the same term for $(\gamma P_0)$ also. Hence we have proved the proposition.
\end{proof}

\begin{cor}
The map $\C[C_{N}]\rightarrow \C[E(N)]$ which to $\overline{P}$ associates $\sE_{P}$ is sujective and its kernel is one dimensional, generated by $\sum_{\overline{P}\in C_{N}}[P]$.
\end{cor}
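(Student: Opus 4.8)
The plan is to combine three ingredients: surjectivity (already essentially established), a dimension count, and an explicit verification that the proposed generator is annihilated.

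First I would record surjectivity. We have already established, as the Corollary to Theorem~\ref{Retraction}, that $E(N)=\ker R$ is spanned by the classes $\sE_P$ with $P$ of order $N$ in $(\Z/N\Z)^2$. Since $\sE_P=\sE_{-P}$, these classes are naturally indexed by $C_N$, so the $\C$-linear map $\overline{P}\mapsto \sE_P$ from $\C[C_N]$ onto the complexification $E(N)\otimes\C$ is surjective.

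Next I would compute $\dim_\C\bigl(E(N)\otimes\C\bigr)$. Since $R$ is a retraction of $\HH_1(X(N),\partial_N,\R)$ onto the subspace $\HH_1(X(N),\R)$ (on which the period pairing is faithful, so that $R$ restricts to the identity there), one gets a direct sum decomposition $\HH_1(X(N),\partial_N,\R)=\HH_1(X(N),\R)\oplus E(N)$. Comparing dimensions in the long exact sequence of relative homology recalled in Section~\ref{MSretract}, whose boundary map $\delta$ has image the degree-zero part of $\R[\partial_N]$, yields $\dim_\R E(N)=|\partial_N|-1$. Under the bijection $\phi\colon\partial_N\iso C_N$ this becomes $\dim_\C\bigl(E(N)\otimes\C\bigr)=|C_N|-1$. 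As the source $\C[C_N]$ has dimension $|C_N|$ and the map is surjective, the kernel is exactly one dimensional.

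Finally I would identify the generator; for this it suffices to exhibit one nonzero kernel element, and I claim $\sum_{\overline{P}\in C_N}[P]$ is one. Expanding the definition,
\[
\sum_{\overline{P}\in C_N}\sE_P=\sum_{\gamma\in\overline{\SL_2(\Z/N\Z)}}\Bigl(\sum_{\overline{P}\in C_N}\overline{F}(\gamma^{-1}P)\Bigr)\xi(\gamma).
\]
For each fixed $\gamma$ the assignment $\overline{P}\mapsto\gamma^{-1}\overline{P}$ permutes $C_N$, so the inner sum equals the $\gamma$-independent constant $c=\sum_{\overline{Q}\in C_N}\overline{F}(Q)$, and thus $\sum_{\overline{P}}\sE_P=c\sum_{\gamma}\xi(\gamma)$. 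Summing the Manin relation $\xi(g)+\xi(gS)=0$ over all $g$ and using that right translation by $S$ permutes $\overline{\SL_2(\Z/N\Z)}$ gives $2\sum_{\gamma}\xi(\gamma)=0$, hence $\sum_{\gamma}\xi(\gamma)=0$ and $\sum_{\overline{P}}\sE_P=0$. Therefore $\sum_{\overline{P}\in C_N}[P]$ lies in the kernel and, the kernel being one dimensional, generates it. The only genuinely non-formal step is the vanishing $\sum_{\gamma}\xi(\gamma)=0$, which I expect to be the crux of the argument; everything else is linear algebra resting on the spanning and boundary computations already carried out above.
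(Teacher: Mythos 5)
Your proof is correct, and while the surjectivity and dimension steps coincide in substance with the paper's (the paper obtains surjectivity from Proposition~\ref{done} via the isomorphism $\delta\colon E(N)\rightarrow\C[\partial_N]^{0}$ and dismisses the dimension count in one sentence; you cite the corollary of Theorem~\ref{Retraction} and make the count explicit via the decomposition $\HH_1(X(N),\partial_N,\R)=\HH_1(X(N),\R)\oplus E(N)$ and the long exact sequence --- the same content in different packaging), your identification of the generator takes a genuinely different route. The paper argues by symmetry: since $\overline{P}\mapsto\sE_P$ is a morphism of $\overline{\SL_2(\Z/N\Z)}$-modules, the kernel is a one-dimensional subrepresentation of the permutation module $\C[C_N]$, and transitivity of the action on $C_N$ forces all coefficients of a kernel element to be equal. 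Strictly speaking, that argument needs the group to act \emph{trivially} on this line (a one-dimensional subrepresentation could a priori carry a nontrivial character; this is ruled out because the character must be trivial on point stabilizers, which contain unipotent elements, and these generate $\overline{\SL_2(\Z/N\Z)}$) --- a point the paper leaves implicit. You instead verify by direct computation that $\sum_{\overline{P}\in C_N}\sE_P=0$: the coefficient $\sum_{\overline{P}\in C_N}\overline{F}(\gamma^{-1}P)$ is a constant independent of $\gamma$ because $\gamma^{-1}$ permutes $C_N$, and $\sum_{\gamma}\xi(\gamma)=0$ follows from summing the Manin relation $\xi(g)+\xi(gS)=0$ over the group (the division by $2$ is harmless over $\C$, or by torsion-freeness of $\HH_1(X(N),\partial_N,\Z)$). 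Your route is more elementary and entirely sidesteps the representation-theoretic subtlety, at the cost of an explicit (but easy) computation; the paper's route is shorter and computation-free, but rests on the implicit triviality of the character. Both arguments are sound.
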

\begin{proof}
Consider the isomorphism of complex vector spaces : 
$E(N)\rightarrow \C[\partial_{N}]^{0}$, which to $\sE$ associates $\delta(\sE)$, where $\C[\partial_{N}]^{0}$ is formed by elements of degree $0$ of $\C[\partial_{N}]$. 

Moreover the map $\delta$ : $ \HH_1(X(N), \partial_N,\C) \rightarrow\C[\partial_{N}]^{0}$ is sujective. Hence by proposition \cite{done}, the map $\overline{P}\mapsto \sE_{P}$ is sujective.

A computation of dimensions tells us that its kernel is of dimension $1$. Moreover, since $\overline{P}\mapsto \sE_{P}$ is a morphisms of $\overline{\SL_2(\Z/N\Z)}$-modules, its kernel is a one dimensional $\overline{\SL_2(\Z/N\Z)}$-module. Since $\overline{\SL_2(\Z/N\Z)}$ acts transitively on $C_{N}$, all coefficients of an element of the kernel are equal. Hence, the kernel is generated by $\sum_{\overline{P}\in C_{N}}[P]$.

\end{proof}

It remains to express $L(\chi)$ in terms of values of Dirichlet $L$-series (and that it is nonzero). Denote by $\chi_{2}$ the Dirichlet character modulo $2N$ which coincides with $\chi$ on odd integers. Let $L(\chi_{2},2)=\sum_{n=1}^{\infty}\chi_{2}(n)n^{-2}$ (which is nonzero).

\begin{prop}
 \label{special}
 One has 
\[
L(\chi)=
\frac{2N^{2}}{\pi^{2}}L(\chi_{2},2).
 \]
\end{prop}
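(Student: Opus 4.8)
The plan is to convert the finite trigonometric expression for $L(\chi)$ into a doubly-indexed Eisenstein-type series and then recognize that series as the Dirichlet $L$-value $L(\chi_{2},2)$. First I would use the half-angle identity $1-\cos\theta=2\sin^{2}(\theta/2)$ to rewrite
\[
L(\chi)=\frac{1}{4}\sum_{\mu\in(\Z/N\Z)^{*}}\frac{\chi(\mu)}{\sin^{2}(\frac{\pi\mu^{0}}{2N})}.
\]
Then I would apply the classical Mittag-Leffler (partial-fraction) expansion $\pi^{2}/\sin^{2}(\pi x)=\sum_{n\in\Z}(x+n)^{-2}$ with $x=\mu^{0}/(2N)$, which gives $\sin^{-2}(\pi\mu^{0}/(2N))=(4N^{2}/\pi^{2})\sum_{n\in\Z}(\mu^{0}+2Nn)^{-2}$ and hence
\[
L(\chi)=\frac{N^{2}}{\pi^{2}}\sum_{\mu\in(\Z/N\Z)^{*}}\chi(\mu)\sum_{n\in\Z}\frac{1}{(\mu^{0}+2Nn)^{2}}.
\]

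The second step is a reindexing of this double sum. Since $N$ is odd, the Chinese Remainder Theorem gives a bijection between $(\Z/N\Z)^{*}$ and $(\Z/2N\Z)^{*}$, the class $\mu$ corresponding to the unique odd residue $\mu^{0}$ modulo $2N$. Thus, as $\mu$ ranges over $(\Z/N\Z)^{*}$ and $n$ over $\Z$, the integers $\mu^{0}+2Nn$ range exactly once over all integers coprime to $2N$. Because $\chi_{2}(\mu^{0}+2Nn)=\chi(\mu)$ by the very definition of $\chi_{2}$, the double sum equals $\sum_{k}\chi_{2}(k)k^{-2}$, the sum being taken over all nonzero $k$ coprime to $2N$ (equivalently, over all $k$ with $\chi_{2}(k)\ne 0$). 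Each inner series is $O(n^{-2})$ and the outer sum is finite, so the rearrangement into a single absolutely convergent series is legitimate.

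Finally, since $\chi$ is even and $-1$ is odd, one has $\chi_{2}(-1)=\chi(-1)=1$, so $\chi_{2}$ is an even character and $k\mapsto\chi_{2}(k)k^{-2}$ is an even function of $k$. Folding the sum over $\Z\setminus\{0\}$ onto the positive integers yields $\sum_{k\ne 0}\chi_{2}(k)k^{-2}=2L(\chi_{2},2)$, whence $L(\chi)=(2N^{2}/\pi^{2})L(\chi_{2},2)$, as claimed; the asserted nonvanishing of $L(\chi)$ then follows from that of $L(\chi_{2},2)$. I expect the only real obstacle to be bookkeeping rather than analysis: one must check carefully that the pairs $(\mu,n)$ enumerate the integers prime to $2N$ without repetition and that $\chi_{2}$ is genuinely constant, equal to $\chi(\mu)$, on each residue class $\mu^{0}+2N\Z$, so that the identification of the double sum with $\sum_{k}\chi_{2}(k)k^{-2}$ is exact.
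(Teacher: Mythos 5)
Your proof is correct and follows essentially the same route as the paper: your half-angle identity plus the Mittag--Leffler expansion $\pi^{2}/\sin^{2}(\pi x)=\sum_{n\in\Z}(x+n)^{-2}$ is exactly the paper's starting expansion $2/(1-\cos(\pi x))=\pi^{-2}\sum_{n\in\Z}(x/2+n)^{-2}$, and your use of the evenness of $\chi$ to fold the sum over $\Z\setminus\{0\}$ onto positive integers plays the same role as the paper's use of evenness to combine $\zeta(2,x/2)+\zeta(2,1-x/2)$. The only difference is organizational: where the paper packages the inner sums as Hurwitz zeta functions and cites the classical relation $L(s,\chi_{2})=m^{-s}\sum_{r}\chi_{2}(r)\zeta(s,r/m)$, you prove that relation inline by the CRT reindexing of the double sum over integers coprime to $2N$.
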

\begin{proof}
Let's remark first that, for $x\in\R$,
\[
\frac{2}{1-\cos(\pi x)}=\frac{1}{\pi^{2}}\sum_{n\in\Z}\frac{1}{(x/2+n)^{2}}=\frac{1}{\pi^{2}}(\sum_{n=1}^{\infty}\frac{1}{(x/2+n)^{2}}+\sum_{n=1}^{\infty}\frac{1}{(x/2+1-n)^{2}})=\frac{1}{\pi^{2}}(\zeta(2,x/2)+\zeta(2,1-x/2)).
 \]
where $\zeta(s,r)=\sum_{n=1}^{\infty}1/(n+r)^{s}$ is the Hurwitz zeta function.
Therefore, we have 
\begin{eqnarray*}
L(\chi)
&=&\frac{1}{2}\sum_{\mu\in (\Z/N\Z)^{*}}\frac{\chi(\mu)}{1-\cos(\pi\mu^{0}/N)}\\
&=&\frac{1}{4\pi^{2}}\sum_{\mu\in (\Z/N\Z)^{*}}\chi(\mu)(\zeta(2,\frac{\mu^{0}}{2N})+\zeta(2,1-\frac{\mu^{0}}{2N}))\\
&=&\frac{1}{2\pi^{2}}\sum_{\mu\in (\Z/N\Z)^{*}}\chi(\mu)\zeta(2,\frac{\mu^{0}}{2N})\\
&=&\frac{1}{2\pi^{2}}\sum_{\alpha=0}^{2N-1}\chi_{2}(\alpha)\zeta(2,\frac{\alpha}{2N})\\
&=&\frac{2N^{2}}{\pi^{2}}L(\chi_{2},2).\\
\end{eqnarray*}
The last equality results from the classical relation between Dirichlet $L$-series and the Hurwitz zeta function [\cite{MR2312338}, Prop. 10.2.5, p. 168]:
\[
L(s, \chi)=\frac{1}{m^s} \sum_{r=0}^{m} \chi(r) \zeta(s, \frac{r}{m}),
\]
where $m$ is the level of $\chi$.
\end{proof}
Given that the cuspidal subgroup of $X(N)$ can be identified with $R(\HH_1(X(N), \partial_N, \Z))/\HH_1(X(N), \Z)$, we recover the classical fact that the order of this cuspidal group is related to the algebraic part of values at $2$ of Dirichlet $L$-series.

\bibliographystyle{Crelle}
\bibliography{Eisensteinquestion.bib}
\end{document}